\newtheorem{theorem}{Theorem}[section]
\newtheorem{lemma}[theorem]{Lemma}
\newtheorem{proposition}[theorem]{Proposition}
\theoremstyle{definition}
\newtheorem{definition}[theorem]{Definition}
\newtheorem{example}[theorem]{Example}
\newtheorem{rema}[theorem]{Remark}
\newtheorem{coro}[theorem]{Corollary}
\begin{document}
\newcommand{\nc}{\newcommand}
\nc{\rnc}{\renewcommand} \nc{\nt}{\newtheorem}


\nc{\TitleAuthor}[2]{\nc{\Tt}{#1}%
    \nc{\At}{#2}%
    \maketitle%
}


\nc{\Hom}{\operatorname{Hom}} \nc{\Mor}{\operatorname{Mor}} \nc{\Aut}{\operatorname{Aut}}
\nc{\Ann}{\operatorname{Ann}} \nc{\Ker}{\operatorname{Ker}} \nc{\Trace}{\operatorname{Trace}}
\nc{\Char}{\operatorname{Char}} \nc{\Mod}{\operatorname{Mod}} \nc{\End}{\operatorname{End}}
\nc{\Spec}{\operatorname{Spec}} \nc{\Span}{\operatorname{Span}} \nc{\sgn}{\operatorname{sgn}}
\nc{\Id}{\operatorname{Id}} \nc{\Com}{\operatorname{Com}}

\nc{\nd}{\mbox{$\not|$}} 
\nc{\nci}{\mbox{$\not\subseteq$}}
\nc{\scontainin}{\mbox{$\mbox{}\subseteq\hspace{-1.5ex}\raisebox{-.5ex}{$_\prime$}\hspace*{1.5ex}$}}


\nc{\R}{{\sf R\hspace*{-0.9ex}\rule{0.15ex}%
    {1.5ex}\hspace*{0.9ex}}}
\nc{\N}{{\sf N\hspace*{-1.0ex}\rule{0.15ex}%
    {1.3ex}\hspace*{1.0ex}}}
\nc{\Q}{{\sf Q\hspace*{-1.1ex}\rule{0.15ex}%
       {1.5ex}\hspace*{1.1ex}}}
\nc{\C}{{\sf C\hspace*{-0.9ex}\rule{0.15ex}%
    {1.3ex}\hspace*{0.9ex}}}
\nc{\Z}{\mbox{${\sf Z}\!\!{\sf Z}$}}


\newcommand{\gd}{\delta}
\newcommand{\sub}{\subset}
\newcommand{\cntd}{\subseteq}
\newcommand{\go}{\omega}
\newcommand{\Pa}{P_{a^\nu,1}(U)}
\newcommand{\fx}{f(x)}
\newcommand{\fy}{f(y)}
\newcommand{\gD}{\Delta}
\newcommand{\gl}{\lambda}
\newcommand{\half}{\frac{1}{2}}
\newcommand{\ga}{\alpha}
\newcommand{\gb}{\beta}
\newcommand{\gga}{\gamma}
\newcommand{\ul}{\underline}
\newcommand{\ol}{\overline}
\newcommand{\Lrraro}{\Longrightarrow}
\newcommand{\equi}{\Longleftrightarrow}
\newcommand{\gt}{\theta}
\newcommand{\op}{\oplus}
\newcommand{\Op}{\bigoplus}
\newcommand{\CR}{{\cal R}}
\newcommand{\tr}{\bigtriangleup}
\newcommand{\grr}{\omega_1}
\newcommand{\ben}{\begin{enumerate}}
\newcommand{\een}{\end{enumerate}}
\newcommand{\ndiv}{\not\mid}
\newcommand{\bab}{\bowtie}
\newcommand{\hal}{\leftharpoonup}
\newcommand{\har}{\rightharpoonup}
\newcommand{\ot}{\otimes}
\newcommand{\OT}{\bigotimes}
\newcommand{\bwe}{\bigwedge}
\newcommand{\gep}{\varepsilon}
\newcommand{\gs}{\sigma}
\newcommand{\OO}{_{(1)}}
\newcommand{\TT}{_{(2)}}
\newcommand{\FF}{_{(3)}}
\newcommand{\minus}{^{-1}}
\newcommand{\CV}{\cal V}
\newcommand{\CVs}{\cal{V}_s}
\newcommand{\slp}{U_q(sl_2)'}
\newcommand{\olp}{O_q(SL_2)'}
\newcommand{\slq}{U_q(sl_n)}
\newcommand{\olq}{O_q(SL_n)}
\newcommand{\un}{U_q(sl_n)'}
\newcommand{\on}{O_q(SL_n)'}
\newcommand{\ct}{\centerline}
\newcommand{\bs}{\bigskip}
\newcommand{\qua}{\rm quasitriangular}
\newcommand{\ms}{\medskip}
\newcommand{\noin}{\noindent}
\newcommand{\raro}{\rightarrow}
\newcommand{\alg}{{\rm Alg}}
\newcommand{\rcom}{{\cal M}^H}
\newcommand{\lcom}{\,^H{\cal M}}
\newcommand{\rmod}{\,_R{\cal M}}
\newcommand{\qtilde}{{\tilde Q^n_{\gs}}}
\nc{\e}{\overline{E}} \nc{\K}{\overline{K}} \nc{\gL}{\Lambda}
\newcommand{\tie}{\bowtie}
\newcommand {\h}{\widehat}
\newcommand {\tl}{\tilde}
\newcommand{\tri}{\triangleright}
\nc{\ad}{_{\dot{ad}}}
\nc{\coad}{_{\dot{{\rm coad}}}}
\nc{\ov}{\overline}

\title{Probabilistically nilpotent Hopf algebras }
\author{Miriam Cohen}
\address {Department of Mathematics\\
Ben Gurion University, Beer Sheva, Israel}
\email {mia@math.bgu.ac.il}
\author{Sara Westreich}
\address{Department of Management\\
Bar-Ilan University,  Ramat-Gan, Israel}
\email{swestric@biu.ac.il}
\thanks {This research was supported by the ISRAEL SCIENCE FOUNDATION, 170-12.}

\subjclass[2000]{16T05}

\date{Sep-2013}
\begin{abstract}
In this paper we investigate nilpotenct and probabilistically  nilpotent Hopf algebras.  We define nilpotency via a descending chain of commutators and give a criterion for nilpotency via a family of central invertible elements. These elements can be obtained from a {\it commutator matrix} $A$ which depends only on the Grothendieck ring of $H.$ When $H$ is almost cocommutative we introduce a probabilistic method. We prove that every semisimple quasitriangular Hopf algebra is probabilistically nilpotent. In a sense we thereby answer the title of our  paper {\it Are we counting or measuring anything?}   by {\it Yes we are}.
\end{abstract}
\maketitle

\section*{Introduction}
Following our previous generalizations of classical and recent ideas about finite groups, this paper deals with nilpotent and probabilistically  nilpotent Hopf algebras. 
 
Nilpotency of  a group is defined via an ascending chain of normal  subgroups corresponding to centers of group quotients. We generalize this ideas to give an intrinsic definition of nilpotency for semisimple Hopf algebras $H$ over a field of characteristic $0.$ The role of the center of a group is played by the Hopf center\cite{an} and the role of normal subgroups  is played by normal left coideal subalgebras of $H.$ Commutator subgroups  provide an alternative  way of defining nilpotency  for groups. Here again our previous work on commutators \cite{cwcom} enables us to generalize these to semisimple  Hopf algebras.

In order to give an easy criterion for nilpotency we define a recursive set of central invertible elements, $\{\gga_m\},$ which are obtained by applying a {\it Commutator operator} on the center of $H$ and  can be realized via a corresponding {\it Commutator matrix}. This matrix,  motivated by \cite{av}, depends only on the Grothendieck ring of $H.$ We show that under a minor assumption on it, $H$ is nilpotent  if and only if $\gga_m=1$ for some $m.$ This is in fact an extension of our previous result in \cite{cwcom}, where we proved, even without the mentioned minor assumption, that $H$ is a commutative algebra (in particular nilpotent) if and only if $\gga_1=1.$

Probabilistic methods for group theory were introduced in the early 1960’s by Erdos and Renyi and have been since applied  with a great deal of success. Many of the methods involve character theory and conjugacy classes (see e.g  \cite{aner}). 

This paper is a first attempt to adopt probabilistic methods  for Hopf algebras.
 While finite groups are naturally probabilistically nilpotent, we prove here that the same is true for semisimple quasitriangular Hopf algebras. 
We show  how some counting functions on groups and their generalization to Hopf algebras as in \cite{cwcom}, can now be realized as {\it distribution functions} and thus can be considered as {\it measuring }. We thereby answer the title of our  paper\cite{cwcom} {\it Are we counting or measuring anything?}   by {\it Yes we are}.

Throughout this paper we assume $H$ is a semisimple Hopf algebra over a field of characteristic $0$ and $\gL$ is its idempotent integral. We denote the character algebra of $H$ by $R(H)$ and the center of $H$ by $Z(H).$ The paper is organized as follows:

\bigskip In $\S 1$ we give the following intrinsic definition of nilpotency for Hopf algebras. For any normal left coideal subalgebra $N$ of $H$ let $H//N$ denote the Hopf quotient $H/H(N\cap\ker\gep).$
Let $\tl{Z}(H)$ denote the Hopf center of $H$ and define a series of normal left coideal subalgebras of H as follows.
Set:
$$H_0=H, \;\pi_0=\Id,\;\,Z_0=k,$$ and set by induction for $i>0,$
$$H_{i}=H_{i-1}//\tl{Z}(H_{i-1}),\;\pi_i:H\longrightarrow H_i,\; Z_i=H^{co\pi_i}.$$

\medskip\noin{\bf Definition \ref{nil}}: A semisimple Hopf algebra $H$ is {\bf nilpotent}  if the ascending central series
$$k\subseteq Z_1\subseteq Z_2\subseteq \cdots$$
satisfies $Z_m=H$ for some $m\ge 1.$ The smallest  such $m$ is called the index of nilpotency of $H.$

\medskip
A categorical definition of nilpotency was given in \cite{gn}. We show how it coincides with our intrinsic definition.

\medskip
For the commutator approach, define as in \cite{cwcom} the generalized commutator $\{a,b\},$ 
$$\{a,b\}=\sum a_1b_1Sa_2Sb_2$$
   for any $a,b\in H.$
	Define an ascending chain of iterated commutators  by:
$$N_0=H,\;N_1= [\{H,\gL\}],\;\ldots,N_t=[\{N_{t-1},\gL\}]$$
where $[S]$ is the normal left coideal subalgebra generated by any subset $S$ of $H.$  
We prove: 

\medskip\noin{\bf Theorem \ref{equal}}
Let $H$ be a semisimple Hopf algebra over an algebraically closed field of characteristic $0.$  Then
$$N_t=k1\Longleftrightarrow Z_t=H.$$
In this case $N_{t-i}\subseteq Z_{i}$ for all $0\le i\le t.$

\bigskip
In $\S 2$ we define the {\it Commutator operator} $T:Z(H)\longrightarrow Z(H)$ by:
$$T(z)=\{z,\gL\}.$$

Let $\{E_i\},\;\{\chi_i\},\;0\le i\le n-1,$ be the full set of central idempotents of $H$ and  the corresponding set of  irreducible characters of degree $d_i.$ 
We prove:

\medskip\noin{\bf Proposition \ref{matrixA}}:(i) The matrix of $T$ with respect to the basis $\{\frac{E_0}{d_0^2},\dots, \frac{E_{n-1}}{d_{n-1}^2}\}$ is  
$A,$ where
$$
A_{ij}=\frac{\langle\chi_is(\chi_i)s(\chi_j),\gL\rangle}{d_j},\quad 0\le i,j\le n-1.$$
\medskip\noin (ii) $A$ has non-negative rational entries and the first column of $A$ has all entries equal $1.$ 

\medskip\noin (iii) The first row of $A^m$ is $(1,0,\dots,0)$ for all $m\ge 0.$ 

\medskip\noin (iv)  In the first column of $A^m$ we have: 
$$(A^m)_{i0}=\sum_j(A^{m-1})_{ij},$$
For all $m> 0,\;0\le i\le n-1.$ 
 In particular, The first column of $A^m$ consists of positive rational numbers.

\medskip We refer to $A$ as the {\it Commutator matrix} of $H.$ Since this matrix depends only on the Grothendieck ring of $H,$ it follows that the commutator matrix is a categorical invariant.

We define the following family of central iterated commutators which play a key role in the sequel. 
$$\gamma_0=\gL,\,\gamma_1=\{\gL,\gL\},\,\dots,\,\gamma_m=\{\gamma_{m-1},\gL\}.$$

We prove:

\medskip\noin{\bf Theorem \ref{connection}}: Let $H$ be a semisimple Hopf algebra over an algebraically closed field of characteristic $0,$ and let $N_t,\,\gga_t$ be defined as above. Assume $\chi_is(\chi_i)\in Z(R(H))$ for each irreducible character $\chi_i.$ Then $H$ is nilpotent if and only if $\gga_m=1$ for some $m\in\Z^+.$  
Its index of nilpotency is  the least integer $m$ so that $\gga_m=1.$

\medskip A significant corollary is the following:

\medskip\noin{\bf Theorem \ref{atothm}}
Let $H$ be a semisimple Hopf algebra over an algebraically closed field of characteristic $0$ and assume $\chi_is(\chi_i)\in Z(R(H))$ for each irreducible character $\chi_i.$ Then $H$ is nilpotent  if and only if its commutator matrix $A$ has eigenvalues $\{1,0\}$ and the algebraic multiplicity of $1$ is $1.$

\bigskip In $\S 3$ we introduce a probabilistic method for semisimple Hopf algebras such that $R(H)$ is  commutative. For $0\le i\le n-1,$ let  $\{F_i\}$ be a full set of primitive idempotents of $R(H).$  In this setup we have defined in \cite{cwmia} Hopf algebraic analogues of conjugacy classes ${\mathfrak C}_i$, class sums $C_i$ and normalized class sums $\eta_i.$ The set $\{\eta_i\}$ form a basis for $Z(H).$

We call an element $z\in Z(H)$  a {\bf central distribution element} if 
$$z=\sum\ga_i\eta_i,\;\ga_i\in \R^+\cup\{0\},\quad \sum_i\ga_i=1.$$  
The idempotent integral $\gL$ and all $\eta_i$ are such elements. The central distribution element $z$ defines a  distribution $X_z$ on $H$ by letting:
$${\rm Prob}(X_z=C_i)=\ga_i$$
The corresponding distribution function $f_z$ is given by:
$$f_z(C_i)={\rm Prob}(X_z=C_i)=\ga_i.$$

\medskip
Counting functions for groups give rise to  distribution functions on the group algebras after dividing by an appropriate power of $|G|.$ 
For example, Frobenius proved  that the function on a finite group $G$ that counts the number of ways an element of $G$ can be realized as a commutator  is given by:
$$f_{rob}=\sum_i \frac{|G|}{d_i}\chi_i.$$
Then  $f= \frac{1}{|G|^2}f_{rob}$ is a distribution function for $kG$ with a corresponding central distribution element $z=\frac{1}{|G|^2}\sum_{a,b\in G}aba\minus b\minus.$ The Hopf algebra analogue of $f$ is $\frac{1}{d}\sum_i \frac{1}{d_i}\chi_i.$ It is the distribution function corresponding to the central distribution element $z=\gga_1.$

\medskip When $H$ is quasitriangular then our {\it commutator map} $T$ maps central distribution elements to central distribution elements. In particular, all $\gamma_m$ are central distribution elements. As a consequence we prove:

\medskip\noin{\bf Proposition \ref{converge}}: Let $z\in Z(H)$ be a central distribution element and let $T(z)=\{z,\gL\}.$ Then $${\rm Prob}(T^m(z)=1)\longrightarrow 1\quad\text{as}\quad m\rightarrow \infty.$$

\medskip These give rise to the following definition:

\medskip\noin{\bf Definition \ref{pnil}}: A semisimple Hopf algebra is {\bf Probabilistically nilpotent} if 
$${\rm Prob}(\gga_m=1)\longrightarrow 1\quad\text{as}\quad m\rightarrow \infty.$$

The main result here is:

\medskip\noin{\bf Theorem \ref{main3}}:
Let  $H$ be a semisimple quasitriangular Hopf algebra over an algebraically closed field of characteristic $0.$ Then $H$ is probabilistically nilpotent. 

\medskip Note that even if $H$ is far from being nilpotent, for example if its Hopf center is $k,$ $H$ is still probabilistically nilpotent. This fact is demonstrated when  
we characterize the eigenvalues of the matrix $A$ over $\C.$

\medskip\noin{\bf Theorem \ref{eigen}}: Let $H$ be a semisimple quasitriangular Hopf algebra over $\C.$ Then the commutator matrix $A$  has $1$ as an eigenvalue with corresponding $1$-dimensional eigenspace. 
 All other eigenvalues $c$ satisfy $|c|<1.$

\section{Upper and lower central series}
Throughout this paper,  $H$ is a semisimple Hopf algebra over an algebraically closed field $k$ of characteristic $0.$ We denote by $S$ and $s$ the antipodes of $H$ and $H^*$ respectively. Denote by  $Z(H)$ the center of $H.$

The Hopf algebra $H^*$ becomes a right and left $H$-module by the {\it hit} actions $\leftharpoonup$ and $\rightharpoonup$ defined for all $a\in H,\,p\in H^*,$
$$\langle  p\leftharpoonup a,a'\rangle  =\langle  p,aa'\rangle  \qquad \langle  a\rightharpoonup p,a'\rangle  =\langle  p,a'a\rangle  $$
$H$ becomes a left and right  $H^*$-module analogously.

Denote by $_{\dot{ad}} $ the left adjoint action of $H$ on itself, that is, for all $a,h\in H,$
$$h_{\dot{ad}}  a=\sum h_1aS(h_2)$$

A left coideal subalgebra of $H$ is called {\it normal} if it is stable under the left adjoint action of $H.$

Let  $\{V_0,\dots V_{n-1}\}$  be a complete set of non-isomorphic irreducible $H$-modules. Let $\{E_0,\dots E_{n-1}\}$ and ${\rm Irr}(H)=\{\chi_0,\dots,\chi_{n-1}\}$ be the associated central primitive idempotents and  irreducible characters of $H$ respectively, where $E_0=\gL,$ the idempotent integral of $H$ and $\chi_0=\gep.$  
Let $\dim V_i=d_i=\langle  \chi_i,1\rangle.$  Then
$$\gl=\chi_H=\sum_{i=0}^{n-1}d_i\chi_i$$
is an integral for $H^*$ satisfying $\langle\gl,\gL\rangle=1.$

\medskip
For any normal left coideal subalgebra $N$ of $H$ we denote by $H//N$ the Hopf quotient $\ol{H}=H/HN^+,$ where $N^+=N\cap\ker\gep.$  
Recall for any Hopf surjection $\pi:H\rightarrow\ol{H}$ we define,
$$H^{co\pi}=\{h\in H|\sum h_1\ot\pi(h_2)=h\ot\pi(1)\}.$$
Then by \cite{ta}
$$H//H^{co\pi}\cong\ol{H}.$$

Related concepts are invariants. For any subalgebra $T$ of $H^*,$ we denote by $H^{T}$ the set of $T$-invariants of $H$ under the left {\it hit} action.
That is,
\begin{equation}\label{hb}H^{T}=\{h\in H\,|b\rightharpoonup h=\langle  b,1\rangle  h,\;\forall b\in T\}
\end{equation}
It was shown in \cite[Prop.2.2]{cwromania} that $B$ is a Hopf subalgebra of $H^*$ if and only if $N=H^B$ is a normal left coideal subalgebra of $H.$ In this case one also has $B=(H^*)^N.$ 

Fix $\pi:H\rightarrow\ol{H},\;B=\ol{H}^*\subset H^*$ and $N$ a left coideal subalgebra then summing up: 
\begin{equation}\label{recall} N=H^{co\pi}\Leftrightarrow\ol{H}= B^*\cong H//N\Leftrightarrow \ol{H}^*=B=(H^*)^N\Leftrightarrow N=H^B.\end{equation}

\medskip
For any $H$-representation $V$  the left kernel ${\rm LKer}_{V}$ is defined as:
\begin{equation}\label{lker0}{\rm LKer}_{V}=\{h\in H\,|\,\sum h_1\ot h_2 \cdot v=h\ot v\quad\forall v\in V\}.\end{equation}
Considering $\ol{H}$ as an $H$-representation it follows  (see e.g. \cite{bu}) that 
$$H^{co\pi}= {\rm LKer}_{\ol{H}}.$$


\bigskip

The Hopf center of a Hopf algebra was introduced and described in \cite{an}. It is the maximal Hopf subalgebra of $H$ contained in $Z(H).$ A categorical version of it is used  in \cite{gn} to define upper central series for fusion categories. In what follows we present an intrinsic approach for semisimple Hopf algebras. One of the advantages of this approach is that it gives rise to a descending chain of commutators of $H$ as well. 

\medskip

Let $H^*_{ad}$ be the Hopf subalgebra of $H^*$ generated by 
$$\chi_{ad}=\sum_j\chi_js(\chi_j),\;\chi_j\in {\rm Irr}(H).$$ 
Equivalently, the Hopf algebra generated by the irreducible constituents of $\chi_is(\chi_i),\,0\le i\le n-1.$ 

Since $\chi_{ad}$ and all its powers are central in $R(H)$ we have for each $l,$
$\chi_i\chi_{ad}^ls(\chi_i)=\chi_{ad}^l\chi_is(\chi_i)\in H^*_{ad}.$ 
Hence $D_iD_{\chi_{ad}^l}s(D_i)\subset H^*_{ad},$ where 
$D_i$ is the simple coalgebra generated by $\chi_i.$ It follows
 that $H^*_{ad}$ is a normal Hopf subalgebra of $H^*.$ 
Define:
\begin{equation}\label{center}\tl{Z}(H)=H^{H^*_{ad}}\subset H.\end{equation}
Take $V=(H,\,\ad)$ then By \cite[Th.2.8]{cwromania}, $\tl{Z}(H)= {\rm LKer}_V.$ Explicitly, 
\begin{equation}\label{lker}\tl{Z}(H)=\{h\in H\,|\,\sum h_1\ot h_2 xSh_3=h\ot x\quad\forall x\in H\}.\end{equation}

\medskip The following proposition is a variation of \cite{an}.
\begin{proposition}\label{HupperB}
Let $H$ be a semisimple Hopf algebra over an algebraically closed field of characteristic $0.$ Then $\tl{Z}(H)$ is the Hopf center of $H.$ Moreover, $\tl{Z}(H)$  contains every left (right) coideal of $H$ contained in $Z(H).$ 
\end{proposition}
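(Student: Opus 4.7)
The plan is to verify three items: (i) $\tl{Z}(H)$ is a normal Hopf subalgebra of $H$; (ii) $\tl{Z}(H)\subseteq Z(H)$; and (iii) every left (and right) coideal of $H$ contained in $Z(H)$ is already contained in $\tl{Z}(H)$. Items (i)--(ii) exhibit $\tl{Z}(H)$ as a Hopf subalgebra inside $Z(H)$, while (iii) forces its maximality; together these identify $\tl{Z}(H)$ with the Hopf center and prove the moreover statement at once.

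For (i), the preamble establishes that $H^*_{ad}$ is a normal Hopf subalgebra of $H^*$. The correspondence \eqref{recall}, in its upgrade from normal left coideal subalgebras to normal Hopf subalgebras, then yields that $\tl{Z}(H)=H^{H^*_{ad}}$ is a normal Hopf subalgebra of $H$. For (ii), take $h\in\tl{Z}(H)$. Applying $\gep\ot\Id$ to \eqref{lker} produces $\sum h_1\,x\,Sh_2=\gep(h)\,x$ for all $x\in H$, so $h$ acts as $\gep(h)\cdot\Id$ on the adjoint representation $(H,\mathrm{ad})\cong\bigoplus_j\End(V_j)$. Restricted to the $j$-th block this amounts to $\sum\rho_j(h_1)\ot\rho_j(Sh_2)=\gep(h)\,I\ot I$ in $\End(V_j)\ot\End(V_j)^{\mathrm{op}}$. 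Writing the corresponding algebra map $\Psi_j\colon H\to\End(V_j)\ot\End(V_j)^{\mathrm{op}}$, $h\mapsto\sum\rho_j(h_1)\ot\rho_j(Sh_2)$, one has $\tl{Z}(H)=k\cdot 1+\bigcap_j\Ker\Psi_j$, and $\bigcap_j\Ker\Psi_j$ is a two-sided ideal of $H$ supported on those blocks $H_k$ whose simple $V_k$ does not occur in any $V_j\ot V_j^*$. By the structural analysis of \cite{an}, each such $H_k$ must be $1$-dimensional and therefore central in $H$, whence $\tl{Z}(H)\subseteq Z(H)$.

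For (iii), let $W$ be a left coideal of $H$ with $W\subseteq Z(H)$ and fix $w\in W$. Iterating $\Delta(W)\subseteq H\ot W$ gives $\Delta^{(2)}(w)\in H\ot H\ot W$, so $w_3\in W\subseteq Z(H)$. Since $S^2=\Id$ in our setting, $Sw_3\in Z(H)$ and hence commutes with every $x\in H$. Therefore
\[
\sum w_1\ot w_2\,x\,Sw_3\;=\;\sum w_1\ot x\,w_2 Sw_3\;=\;\sum w_1\ot x\,\gep(w_{23})\;=\;w\ot x,
\]
where $w_{23}$ denotes the second factor of $\Delta(w)$ (so that $\Delta(w_{23})=w_2\ot w_3$), using $m\,(\Id\ot S)\,\Delta(w_{23})=\gep(w_{23})\cdot 1$ and $(\Id\ot\gep)\,\Delta(w)=w$. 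This verifies \eqref{lker}, so $w\in\tl{Z}(H)$. A right coideal $W'\subseteq Z(H)$ maps under $S$ to a left coideal in $S(Z(H))=Z(H)$, which lies in $\tl{Z}(H)$ by the preceding argument; then the $S$-stability of $\tl{Z}(H)$ from (i) returns $W'\subseteq\tl{Z}(H)$. The main obstacle lies in step (ii): invoking the input from \cite{an} to ensure that $\bigcap_j\Ker\Psi_j$ is supported only on $1$-dimensional blocks. Everything else is direct bookkeeping with \eqref{lker}.
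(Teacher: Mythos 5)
Your items (i) and (iii) follow the paper closely (in (iii) note that for a left coideal only the last factor $w_3$ of $\Delta^{(2)}(w)$ is guaranteed to lie in $W$, so you should commute the central element $Sw_3$ past $x$ rather than commuting $x$ past $w_2$; this is cosmetic). The genuine gap is in step (ii). By applying $\gep\ot\Id$ to \eqref{lker} you discard the first tensor leg and replace the left--kernel condition by the strictly weaker statement that $h$ acts as the scalar $\gep(h)$ on the adjoint representation; to recover centrality you then invoke the claim that every block $E_kH$ whose simple module $V_k$ occurs in no $V_j\ot V_j^*$ is $1$-dimensional. That claim is false. Take $H=kG$ with $G$ extraspecial of order $p^3$: the adjoint (conjugation) character is $g\mapsto |C_G(g)|$, and an irreducible $\chi$ of degree $p$ vanishes off the center $Z_G$ and restricts there to $p\omega$ with $\omega$ a nontrivial character of $Z_G$, so $\frac{1}{|G|}\sum_g |C_G(g)|\overline{\chi(g)}=p\sum_{z\in Z_G}\overline{\omega(z)}=0$. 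Thus $V_\chi$ is a $p$-dimensional simple module occurring in no $V_j\ot V_j^*$, and the corresponding $p\times p$ matrix block lies in $\bigcap_j\Ker\Psi_j$ but is not central. Your argument therefore only establishes $\tl{Z}(H)\subseteq k\cdot 1+\bigcap_j\Ker\Psi_j$, and this larger set need not be contained in $Z(H)$, so (ii) is not proved.

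The repair is to keep all three legs of \eqref{lker}: apply $\mu\circ(S\ot\Id)$ to $\sum h_1\ot h_2xSh_3=h\ot x$ to obtain $\sum S(h_1)h_2\,x\,S(h_3)=S(h)x$, whose left-hand side collapses to $xS(h)$; hence $xS(h)=S(h)x$ for all $x\in H$, so $S(h)$, and therefore $h$, lies in $Z(H)$. This is exactly the paper's one-line argument, and it requires no structural input from \cite{an} about which simple modules occur in the adjoint representation.
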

\begin{proof}

Observe first that every left coideal $N$ contained in $Z(H)$ is contained in $\tl{Z}(H).$  Indeed, if $h\in N\subset Z(H)$ then   sp$_k\{h_3\}\subset N\subset Z(H)$ implying that $h$  satisfies the right hand side of \eqref{lker}.

We wish to show now that $\tl{Z}(H)$ is  contained in $Z(H).$ 
Let $h\in \tl{Z}(H).$ Applying $\mu\circ(S\ot\Id)$ to the right hand side of \eqref{lker} yields $xSh=Shx$ for all $x\in H,$ hence $Sh$ and thus $h\in Z(H).$ 

 Since $H^*_{ad}$ is normal in $H^*$ it follows from \cite[Prop.2.2]{cwromania} that $\tl{Z}(H)=H^{H^*_{ad}}$ is a Hopf subalgebra of $H.$ 
\end{proof}

\medskip
In what follows we generalize directly the notion of upper central series from finite groups to semisimple Hopf algebras. 
We define an ascending central series of normal left coideal subalgebras $\{Z_n\}.$ As for groups this series corresponds to Hopf centers of Hopf quotients. Set:
$$H_0=H, \;\pi_0=\Id,\;\,Z_0=k,$$ and set by induction for $i>0,$
\begin{equation}\label{z}H_{i}=H_{i-1}//\tl{Z}(H_{i-1}),\;\pi_i:H\longrightarrow H_i,\; Z_i=H^{co\pi_i}.\end{equation}
Then
\begin{equation}\label{quo}Z_1=\tl{Z}(H),\quad H_{i}=H//Z_{i}.\end{equation}
\begin{definition}\label{nil} A semisimple Hopf algebra $H$ is {\bf nilpotent}  if the ascending central series
$$k\subseteq Z_1\subseteq Z_2\subseteq \cdots$$
satisfies $Z_m=H$ for some $m\ge 1.$ The smallest  such $m$ is called the index of nilpotency of $H.$
\end{definition}

An alternative way to realize $\{Z_n\}$ arises from \cite[Def. 4.1]{gn}. One has a descending chain of Hopf subalgebras of $H^*$ defined as follows:
\begin{equation}\label{b}B_0=H^*,\; B_1=H^*_{ad},\;\ldots, B_{i+1}=B_{i_{ad}}.\end{equation}
If $B_m=k$ then $H$ is nilpotent. 
The following lemma enables us to connect the two definitions of nilpotency:
\begin{lemma}\label{zn}
Let $Z_n$ and $B_n$ be defined as in \eqref{z} and \eqref{b} respectively.  Then  for $n\ge 0$ we have $$Z_n=H^{B_n}.$$ 

In particular,  the intrinsic definition and the categotical definition of nilpotency coincide. 
\end{lemma}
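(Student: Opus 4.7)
My plan is to prove $Z_n = H^{B_n}$ by induction on $n$, carrying along the stronger auxiliary identification $H_n^* = B_n$ as Hopf subalgebras of $H^*$ (via the dual of $\pi_n$). The two statements feed into each other through the equivalences \eqref{recall}: if $H_n^* = B_n$ as a Hopf subalgebra of $H^*$, then its invariants are $H^{B_n} = H^{H_n^*} = H^{co\pi_n} = Z_n$, so the auxiliary claim at step $n$ yields the main claim at step $n$.

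The base case $n=0$ is immediate from $H_0 = H$ and $H^{H^*} = k$. The case $n=1$ is just the definition \eqref{center} of $\tl Z(H) = H^{H^*_{ad}}$ combined with \eqref{recall} applied to $\pi_1: H \to H//\tl{Z}(H)$, giving both $Z_1 = H^{B_1}$ and $H_1^* = H^*_{ad} = B_1$.

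For the inductive step, assume $H_n^* = B_n$ and $Z_n = H^{B_n}$. Apply the definition of the Hopf center to $H_n$ itself to get $\tl Z(H_n) = H_n^{(H_n^*)_{ad}}$. Because the adjoint Hopf subalgebra is intrinsic to the Hopf algebra structure, the identification $H_n^* = B_n$ yields $(H_n^*)_{ad} = (B_n)_{ad} = B_{n+1}$, sitting inside $H_n^* = B_n \subset H^*$. Applying \eqref{recall} to the surjection $H_n \to H_{n+1} = H_n // \tl Z(H_n)$ produces $H_{n+1}^* = (H_n^*)^{\tl Z(H_n)} = (H_n^*)_{ad} = B_{n+1}$ as Hopf subalgebras of $H_n^*$. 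Since $\pi_{n+1}$ factors as $H \to H_n \to H_{n+1}$, the inclusion $H_{n+1}^* \hookrightarrow H^*$ factors through $H_n^*$, so the identification persists: $H_{n+1}^* = B_{n+1}$ inside $H^*$. A final application of \eqref{recall} to $\pi_{n+1}$ gives $Z_{n+1} = H^{co\pi_{n+1}} = H^{H_{n+1}^*} = H^{B_{n+1}}$, closing the induction.

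The main obstacle is the bookkeeping around the tower $H_{n+1}^* \subset H_n^* \subset H^*$ and verifying that the adjoint-Hopf-subalgebra operation is compatible with the embedding $H_n^* = B_n \hookrightarrow H^*$; essentially one must check that taking $(-)_{ad}$ commutes with the tower of inclusions, so that $B_{n+1}$ computed abstractly from $B_n$ agrees with $H_{n+1}^*$ viewed inside $H^*$. Once the lemma is proved, the final ``in particular'' assertion is a short consequence: $Z_m = H$ iff $H_m = H//Z_m = k$ iff $H_m^* = k$ iff $B_m = k$, which is the categorical nilpotency condition of \cite{gn}.
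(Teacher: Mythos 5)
Your proposal is correct and follows essentially the same route as the paper: an induction whose inductive hypothesis is precisely the paper's identification $H_n\cong B_n^*$ (your $H_n^*=B_n$ inside $H^*$), combined with \eqref{recall} and \eqref{center} at each step. Your extra care about the compatibility of $(-)_{ad}$ with the tower of inclusions $H_{n+1}^*\subset H_n^*\subset H^*$, and your explicit derivation of the ``in particular'' clause, only make explicit points the paper leaves implicit.
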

\begin{proof}
By \eqref{z}, $H_1=H//Z_1,$   hence by \eqref{recall} and  \eqref{center} $Z_1=H^{B_1}.$    Assume by induction $Z_i=H^{B_i},$ 
then by \eqref{quo} and \eqref{recall},
$$H_{i}=H//Z_i\cong B_i^*.$$
It follows from \eqref{center} and \eqref{b} that
$$\tl{Z}(H_{i})=H_{i}^{B_{i_{ad}}}=H_{i}^{B_{i+1}}$$
hence by \eqref{z} and \eqref{recall},
$$H_{i+1}=H_i//\tl{Z}(H_i)\cong B_{i+1}^*.$$
But by \eqref{quo} $H_{i+1}\cong H//Z_{i+1},$ hence  $H//Z_{i+1}\cong B_{i+1}^*$ by the formula in the line above. This implies by \eqref{recall}  that:
$$Z_{i+1}=H^{B_{i+1}}.$$
\end{proof}

\bigskip
For groups there is  a related descending chain of subgroups arising from commutators. In what follows we generalize this idea to Hopf  algebras. As in \cite{cwcom}, for $a,b\in H,$ set their commutator 
$$\{a,b\}=\sum a_1b_1Sa_2Sb_2.$$
 
Commutators and Hopf centers are related in the following way.
\begin{proposition}\label{0}
Let $N$ be a normal left coideal subalgebra of $H.$ Then 
for all $t\ge 0,$
$$\{N,\gL\}\subseteq Z_t\Longleftrightarrow N\subseteq Z_{t+1}.$$  
\end{proposition}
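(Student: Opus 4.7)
\bigskip\noindent\textbf{Plan for the proof of Proposition \ref{0}.}

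The strategy is to push everything down to the quotient $H_t = H/\!/Z_t$ via $\pi_t$, and thereby reduce to the base case $t=0$, which asserts that for any normal left coideal subalgebra $M$ of a semisimple Hopf algebra $K$ with idempotent integral $\Lambda_K$, one has $\{M,\Lambda_K\}\subseteq k\cdot 1$ if and only if $M\subseteq \tilde Z(K)$.

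For the reduction step, I would first record two translation facts. Since $\pi_t$ is a surjective Hopf map and $\epsilon(\pi_t(\Lambda))=1$, uniqueness of the idempotent integral of $H_t$ gives $\pi_t(\Lambda)=\Lambda_{H_t}$; since $\pi_t$ preserves coproduct and antipode, it commutes with the bracket, so $\pi_t(\{n,\Lambda\})=\{\pi_t(n),\Lambda_{H_t}\}$. Combined with \eqref{quo}, \eqref{recall}, and the observation that for a left coideal $L\subseteq H$ the condition $L\subseteq Z_s$ is equivalent to $\pi_s(L)\subseteq k\cdot 1$ (because $\Delta L\subseteq L\otimes H$ forces $\sum \ell_1\otimes\pi_s(\ell_2)=\ell\otimes 1$ as soon as $\pi_s(\ell_2)\in k\cdot 1$), this yields the two equivalences
\[
N\subseteq Z_{t+1}\iff \pi_t(N)\subseteq\tilde Z(H_t),\qquad \{N,\Lambda\}\subseteq Z_t\iff \{\pi_t(N),\Lambda_{H_t}\}\subseteq k\cdot 1_{H_t}.
\]
Thus both sides of the proposition translate into statements about the normal left coideal subalgebra $\pi_t(N)$ of $H_t$, and it suffices to prove the base case applied to $K=H_t$ and $M=\pi_t(N)$.

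For the base case, the direction $(\Leftarrow)$ is immediate from the characterization \eqref{lker}: if $m\in\tilde Z(K)$, then since $\tilde Z(K)$ is a Hopf subalgebra, $\Delta m\in\tilde Z(K)\otimes\tilde Z(K)\subseteq Z(K)\otimes Z(K)$, so each $m_1$ commutes with $\Lambda_{K,1}$, and
\[
\{m,\Lambda_K\}=\sum m_1\Lambda_{K,1}Sm_2 S\Lambda_{K,2}=\sum\Lambda_{K,1}(m_1 Sm_2)S\Lambda_{K,2}=\epsilon(m)\sum\Lambda_{K,1}S\Lambda_{K,2}=\epsilon(m)\cdot 1.
\]
The converse $(\Rightarrow)$ is the crux: assuming $\{m,\Lambda_K\}=\epsilon(m)\cdot 1$ for every $m\in M$, one must show $M\subseteq\tilde Z(K)=K^{K^*_{ad}}$. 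I would invoke the normality of $M$ to apply the hypothesis to every adjoint shift $\sum h_1 m Sh_2\in M$, then translate the resulting identities into the statement $\chi_{ad}\rightharpoonup m=\epsilon(\chi_{ad})m$ (and likewise for all generators of $K^*_{ad}$), using the standard identification of $\chi_{ad}\rightharpoonup(-)$ with a $\Lambda_K$-weighted adjoint averaging. Because $M$ is a left coideal, closure of this invariance under $\Delta$ then upgrades the element-wise centrality to membership in the Hopf center $K^{K^*_{ad}}$ via \eqref{hb}.

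\textbf{Main obstacle.} The forward direction and the diagram chase in Step~1 are routine, but the converse in the base case is the delicate point: one must leverage coideal structure plus normality of $M$ to bootstrap the single scalar identity $\{m,\Lambda_K\}=\epsilon(m)$ into the full invariance under the Hopf subalgebra $K^*_{ad}$, rather than merely into algebraic centrality $m\in Z(K)$. The natural tool is the dictionary between $\Lambda_K$-averaged commutators and the hit action of $\chi_{ad}$, which must be deployed with care to recover the coideal-level invariance required by \eqref{lker}.
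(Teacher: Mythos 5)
Your overall architecture (push everything down to $H_t=H//Z_t$ and prove a base case at $t=0$) is the same as the paper's, and your base-case direction $(\Leftarrow)$ is correct. However, there are two genuine gaps. The first is that the crux of the base case, $\{M,\gL_K\}\subseteq k\Rightarrow M\subseteq\tl{Z}(K)$, is exactly the part you leave unproven, and the plan you sketch for it aims at the wrong target. You assert that one must bootstrap the hypothesis into ``full invariance under $K^*_{ad}$ rather than merely algebraic centrality $m\in Z(K)$,'' but for a \emph{left coideal} algebraic centrality already suffices: Proposition \ref{HupperB} states that every left coideal contained in $Z(K)$ is contained in the Hopf center $\tl{Z}(K)$. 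So the short route is to prove centrality, which the paper does via the identity
$$\gL\ad Sx=\sum Sx_1x_2\gL_1Sx_3S\gL_2=\sum Sx_1\{x_2,\gL\}=Sx,$$
where the last equality holds because $x_2\in M$ (left coideal), so $\{x_2,\gL\}\in\{M,\gL\}\subseteq k$ equals $\gep(x_2)1$; since $\gL\ad(-)$ has image $Z(K)$, this gives $Sx\in Z(K)$, hence $x\in Z(K)$, and Proposition \ref{HupperB} finishes. Your proposed detour through the hit action of $\chi_{ad}$ is never carried out and is unnecessary.

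The second gap is in your reduction step: you justify $\{N,\gL\}\subseteq Z_t\Leftrightarrow\{\pi_t(N),\gL_{H_t}\}\subseteq k$ by the observation that a left coideal $L$ satisfies $L\subseteq Z_s$ iff $\pi_s(L)\subseteq k$. That observation is fine when applied to $N$ itself, but $\{N,\gL\}$ is \emph{not} a left coideal (this is precisely why the paper passes to the normal left coideal subalgebra $[\{N,\gL\}]$ in \eqref{N}), so the implication $\{\pi_t(N),\gL_{H_t}\}\subseteq k\Rightarrow\{N,\gL\}\subseteq Z_t$ does not follow from it. The paper closes this exact point by a direct computation: for $m=\sum n_1\gL_1Sn_2S\gL_2$ it verifies $\sum m_1\ot\pi_t(m_2)=m\ot\pi_t(1)$, using that the subcoalgebra generated by $\pi_t(N)$ lies in $\tl{Z}(H_t)\subseteq Z(H_t)$, whence $m\in H^{co\pi_t}=Z_t$. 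You could alternatively repair your reduction by replacing $\{N,\gL\}$ with $[\{N,\gL\}]$ throughout and checking that $\pi_t$ is compatible with generating normal left coideal subalgebras, but some such argument must be supplied.
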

\begin{proof}
Assume first $\{N,\gL\}=k.$ We show that if $x\in N$ then $S(x)\in Z(H).$ 
Indeed, $$\gL\ad Sx=\sum Sx_1x_2\gL_1 Sx_3 S\gL_2=\sum Sx_1\{x_2,\gL\}=Sx.$$ 
The last equality follows from the assumption since $N$ is a left coideal.
Thus $x\in Z(H)$ as well.  Since $Z_1$ is the Hopf center of $H$ it  follows from Lemma \ref{HupperB} that  $N\subseteq Z_1.$

Conversely, if $N\subseteq Z_{1}$ then in particular $N\subseteq Z(H).$ Hence for all $x\in N,\;\sum\gL_1x_1S\gL_2Sx_2=\langle \gep,x\rangle.$ 

We continue by induction. Note $\pi(\gL)$ is an idempotent integral for any homomorphic image $\pi(H)$ of $H.$  Hence 
$$\{N,\gL\}\subseteq Z_t\Longrightarrow  \{\pi_t(N),\pi_t(\gL)\}=\pi_t(\{N,\gL\})=k.$$
But $\pi_t(N)$ is a normal left coideal subalgebra of $H_t,$ hence by the first step of the induction proved above we obtain 
$\pi_t(N)\subset \tl{Z}(H_t).$
Now, by definition, $H_{t+1}\cong H_t//\tl{Z}(H_t),$ hence $N$ is mapped under $\pi_{t+1}$ into $k.$ Since $N$ is a normal left coideal subalgebra it follows that $$N\subset H^{co\pi_{t+1}}=Z_{t+1}.$$

Conversely, assume $N\subset Z_{t+1}.$  Then $\pi_{t+1}(N)=k$ and since $\pi_t(N)$ is a left coideal it follows that $\pi_t(N)\subset \tl{Z}(H_t).$   Let $m=\sum n_1\gL_1Sn_2S\gL_2\in \{N,\gL\},$ then 
$$\sum m_1\ot \pi_t(m_2)=\sum n_1\gL_1Sn_4S\gL_4\ot\pi_t(n_2)\pi_t(\gL_2)\pi_t(Sn_3)\pi_t(S\gL_3)=m\ot\pi_t(1).$$
The last equality holds since $\pi_t(N)\subset \tl{Z}(H_t).$ This implies that the subcoalgebra generated by $\pi_t(N)$ is contained in $\tl{Z}(H_t)\subset Z(H_t).$  Thus $m\in Z_t.$
\end{proof} 

\medskip
For a set $S$ let $[S]$ denote the normal left coideal subalgebra generated by $S.$  Define a descending chain of iterated commutators for $H$ as follows:
\begin{equation}\label{N}N_0=H,\;N_1= [\{H,\gL\}],\;\ldots,N_t=[\{N_{t-1},\gL\}]\end{equation}
By induction, if $N_t\subseteq N_{t-1}$ then $\{N_{t},\gL\}\subseteq \{N_{t-1},\gL\}$ hence $$N_{t+1}=[\{N_{t},\gL\}]\subseteq [\{N_{t-1},\gL\}]=N_t.$$

\begin{rema}\label{groups2} 
Let $H=kG,\;G$ a finite group.  Then $\tl{Z}(H)=kZ_G,$ where $Z_G$ is the center of the group $G.$ This follows since every Hopf subalgebra of $kG$ has the form $kK$ where $K$ is a subgroup of $G.$ 
For any Hopf quotient we have $kG//kK\cong k(G/K).$ Moreover, for any Hopf surjection $\pi$  we have $H^{co\pi}=k\pi\minus(1).$ These observations imply that an ascending central series for $G$ gives rise to an ascending central series for $kG$ and vice verse. 

If we denote by induction $G_1=[G,G]$ and $G_{t+1}=[G_t,G],$ then   $N_t=kG_t.$  
\end{rema}
We can prove now the main result of this section.
\begin{theorem}\label{equal}
Let $H$ be a semisimple Hopf algebra over an algebraically closed field of characteristic $0,$ let $Z_t$ be defined as in  \eqref{z} and $N_t$ as defined in \eqref{N}. Then
$$N_t=k1\Longleftrightarrow Z_t=H.$$
In this case $N_{t-i}\subseteq Z_{i}$ for all $0\le i\le t.$
\end{theorem}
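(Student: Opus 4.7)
My plan is to turn Proposition \ref{0} into a one-step recursion and then iterate it from opposite ends for the two implications. Proposition \ref{0} says that for any normal left coideal subalgebra $N$ of $H$,
$$\{N,\gL\}\subseteq Z_t\Longleftrightarrow N\subseteq Z_{t+1},$$
and both the $N_j$ and the $Z_j$ are normal left coideal subalgebras of $H$ by construction, so the hypothesis of that proposition is available at every stage of the induction.

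For the direction $N_t=k1\Longrightarrow Z_t=H$, I induct on $i$ in the range $0\le i\le t$ and establish $N_{t-i}\subseteq Z_i$. The base case $i=0$ is $N_t=k1\subseteq k=Z_0$. For the step, assume $N_{t-i}\subseteq Z_i$; then
$$\{N_{t-i-1},\gL\}\subseteq [\{N_{t-i-1},\gL\}]=N_{t-i}\subseteq Z_i,$$
so Proposition \ref{0} applied to the normal left coideal subalgebra $N_{t-i-1}$ gives $N_{t-i-1}\subseteq Z_{i+1}$. Taking $i=t$ yields $H=N_0\subseteq Z_t$, hence $Z_t=H$, and the auxiliary inclusions $N_{t-i}\subseteq Z_i$ have been produced along the way.

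For the reverse direction $Z_t=H\Longrightarrow N_t=k1$, I induct on $j$ to show $N_j\subseteq Z_{t-j}$, now walking down from $j=0$ to $j=t$. The base $j=0$ is $N_0=H=Z_t$. For the step, suppose $N_j\subseteq Z_{t-j}$. Proposition \ref{0}, read in the direction $N\subseteq Z_{t+1}\Rightarrow\{N,\gL\}\subseteq Z_t$, gives $\{N_j,\gL\}\subseteq Z_{t-j-1}$. Since $Z_{t-j-1}$ is itself a normal left coideal subalgebra, it contains the normal left coideal subalgebra generated by $\{N_j,\gL\}$, namely $N_{j+1}$. At $j=t$ this yields $N_t\subseteq Z_0=k$, hence $N_t=k1$.

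The argument is essentially mechanical once Proposition \ref{0} is available, and I do not foresee a real obstacle. The only minor point to check is that every subobject appearing in the induction is a normal left coideal subalgebra, so that Proposition \ref{0} and the passage from $\{N_j,\gL\}\subseteq Z_{t-j-1}$ to $[\{N_j,\gL\}]\subseteq Z_{t-j-1}$ are legitimate; this is immediate from the definitions of the two chains.
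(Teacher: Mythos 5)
Your proof is correct and is essentially the paper's own argument: the paper packages your two one-step inductive steps into a single biconditional claim ($N_{t-i}\subseteq Z_{i}\Longleftrightarrow N_{t-i-1}\subseteq Z_{i+1}$, proved exactly as you do, via Proposition \ref{0} plus the fact that $Z_i$ is a normal left coideal subalgebra containing $\{N_{t-i-1},\gL\}$ and hence contains $[\{N_{t-i-1},\gL\}]$) and then iterates it from both ends, just as you do.
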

\begin{proof}
We claim first that for all $0\le i\le t,$
\begin{equation}\label{nit}N_{t-i}\subseteq Z_{i}\Longleftrightarrow N_{t-i-1}\subseteq Z_{i+1}.\end{equation}
Indeed, assume $N_{t-i}\subseteq Z_{i}.$ Since by definition $\{N_{t-i-1},\gL\}\subseteq N_{t-i},$  it follows in particular that  $\{N_{t-i-1},\gL\}\subseteq Z_i.$ By Proposition \ref{0}   $N_{t-i-1}\subseteq Z_{i+1}.$  Conversely, if $N_{t-i-1}\subseteq Z_{i+1}$ then  $\{N_{t-i-1},\gL\}\subset Z_i$ by Proposition \ref{0}. 
Since $N_{t-i}$ is generated as a normal left coideal by $\{N_{t-i-1},\gL\},$ and since $Z_i$ is a normal left coideal subalgebra containing $\{N_{t-i-1},\gL\},$ it follows that  $N_{t-i}\subseteq Z_{i}.$ This proves our claim.

Now, if $N_t=k,$ then by \eqref{nit}, $N_{t-1}\subset Z_1,$ and by induction on $i$ $N_{t-i}\subset Z_i.$  For $i=t$ we get $H=N_0\subset Z_t.$

Conversely, assume that $Z_t=H=N_0,$ then we use \eqref{nit} again to prove by induction on $j$ that $N_j\subset Z_{t-j}$ for all $j.$ When $j=t$ we obtain $N_t=k1.$ 
\end{proof}

\section{Iterated commutators}

Recall,  (see e.g \cite[Cor.4.6]{sc}):
 \begin{equation}\label{dual1}\langle  \chi_i,E_j\rangle  =\gd_{ij}d_j,\quad\chi_i\leftharpoonup E_j=\gd_{ij}\chi_i,\quad \gL\leftharpoonup s(\chi_j)=\frac{1}{d_j}E_j.\end{equation}
In particular, $\{\chi_i\},\,\{\frac{1}{d_j}E_j\}$ are dual bases of $R(H)$  and $Z(H)$ respectively.
Hence we have for each $z\in Z(H),\,p\in R(H)$
\begin{equation}\label{zzz}z=\sum_i \frac{1}{d_i}\left\langle \chi_i,z\right\rangle E_i\qquad p=\sum_i\frac{1}{d_i}\langle p,E_i\rangle\chi_i.\end{equation}
By \eqref{dual1} and \eqref{zzz} we have:
\begin{equation}\label{act}
\chi_i\leftharpoonup z = \frac{1}{d_i}\left\langle \chi_i,z\right\rangle\chi_i\end{equation}
for all $i.$
Since $\gL\ad H=Z(H)$ we have for all $h\in H,$
\begin{eqnarray*}\lefteqn{\langle\chi_i,\{h,\gL\}\rangle=}\\
&=&\sum \langle\chi_i,\gL_1h_1S\gL_2Sh_2\rangle=\sum \langle\chi_i\leftharpoonup\gL_1h_1S\gL_2,Sh_2\rangle=\frac{1}{d_i} \langle\chi_is(\chi_i),h\rangle.\end{eqnarray*}
where the last equality follows from \eqref{act}.
Thus
\begin{equation}\label{zi}\langle\chi_i,\{h,\gL\}\rangle=\frac{1}{d_i} \langle\chi_is(\chi_i),h\rangle.\end{equation}

Define an operator $T:Z(H)\longrightarrow Z(H)$ by
$$T(z)=\{z,\gL\}.$$
Indeed, by \cite[Lemma 2.3]{cwcom}, $T(z)\in Z(H)$ for all $z\in Z(H).$ Moreover, by definition of $N_m$ we have
\begin{equation}\label{tnm}
T^m(z)\in N_m\quad\forall z\in Z(H)\end{equation}
We have:
\begin{proposition}\label{matrixA}
(i) The matrix of $T$ with respect to the basis $\{\frac{E_0}{d_0^2},\dots, \frac{E_{n-1}}{d_{n-1}^2}\}$ is  
$A,$ where
\begin{equation}\label{A}
A_{ij}=\frac{\langle\chi_is(\chi_i)s(\chi_j),\gL\rangle}{d_j},\quad 0\le i,j\le n-1.\end{equation}

\medskip\noin (ii) $A$ has non-negative rational entries and the first column of $A$ has all entries equal $1.$ 

\medskip\noin (iii) The first row of $A^m$ is $(1,0,\dots,0)$ for all $m\ge 0.$ 

\medskip\noin (iv)  In the first column of $A^m$ we have: 
$$(A^m)_{i0}=\sum_j(A^{m-1})_{ij},$$
For all $m> 0,\;0\le i\le n-1.$ 
 In particular, The first column of  $A^m$ consists of positive rational numbers.
\end{proposition}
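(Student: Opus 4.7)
The plan is to prove (i) by a direct coordinate computation, and then derive (ii)--(iv) as algebraic consequences. For (i), apply $T$ to each basis vector $E_j/d_j^2$ and expand the result in the same basis. Pairing $T(E_j/d_j^2)$ with $\chi_i$ and invoking \eqref{zi} with $h=E_j/d_j^2$ immediately gives
$$\langle\chi_i,T(E_j/d_j^2)\rangle=\frac{1}{d_id_j^2}\langle\chi_is(\chi_i),E_j\rangle.$$
Substituting this into the reconstruction formula \eqref{zzz} for central elements and reading off the coefficient of $E_i/d_i^2$ identifies $A_{ij}=\langle\chi_is(\chi_i),E_j\rangle/d_j^2$. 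To pass to the stated form, use the identity $E_j=d_j(\gL\hal s(\chi_j))$ from \eqref{dual1} together with the standard adjunction $\langle p,\gL\hal q\rangle=\langle pq,\gL\rangle$, which rewrites $\langle\chi_is(\chi_i),E_j\rangle$ as $d_j\langle\chi_is(\chi_i)s(\chi_j),\gL\rangle$.

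For (ii), note that $\langle\chi_is(\chi_i)s(\chi_j),\gL\rangle$ is the multiplicity of the trivial character $\gep$ in the character product $\chi_is(\chi_i)s(\chi_j)$; this is a non-negative integer, equal to $\dim\Hom_H(V_j,V_i\ot V_i^*)$ by the usual tensor-hom adjunction. Dividing by the positive integer $d_j$ shows $A_{ij}$ is a non-negative rational. The claim $A_{i0}=1$ is the case $j=0$: here $s(\chi_0)=\gep$ and the multiplicity reduces to $\dim\End_H(V_i)=1$ by Schur's lemma (the ground field being algebraically closed).

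For (iii), direct computation gives $A_{0j}=\langle s(\chi_j),\gL\rangle/d_j=\langle\chi_j,S\gL\rangle/d_j=\gd_{0j}$, using $d_0=1$ and $S\gL=\gL$ for the semisimple idempotent integral. Hence the first row of $A$ itself is $(1,0,\ldots,0)$, and since this row vector is a left eigenvector of $A$ with eigenvalue $1$, it is the first row of $A^m$ for every $m\ge 0$. For (iv), the identity $(A^m)_{i0}=\sum_k(A^{m-1})_{ik}A_{k0}$ reduces via the first-column identity of (ii) to $(A^m)_{i0}=\sum_k(A^{m-1})_{ik}$; rationality is inherited from $A$, and positivity follows since the $k=0$ term alone contributes $(A^{m-1})_{i0}\ge 1$ by induction from $A_{i0}=1$. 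The only mildly delicate point in this whole argument is the adjunction manipulation at the end of (i); once that is in hand, parts (ii)--(iv) are essentially bookkeeping in the character ring.
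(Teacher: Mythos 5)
Your proof is correct and follows essentially the same route as the paper: part (i) via the pairing \eqref{zi} together with dual bases and the Frobenius identity $E_j=d_j(\gL\leftharpoonup s(\chi_j))$, and parts (ii)--(iv) by interpreting $\langle\chi_is(\chi_i)s(\chi_j),\gL\rangle$ as a multiplicity and exploiting $A_{i0}=1$, $A_{0j}=\gd_{0j}$. Your direct derivation of $(A^m)_{i0}=\sum_k(A^{m-1})_{ik}$ from $A_{k0}=1$ is a slight streamlining of the paper's induction, but not a different argument.
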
 
\begin{proof}
(i) By dual bases of $Z(H)$ and $R(H)$ and by \eqref{zi} we have:
\begin{eqnarray*}\lefteqn {T(\frac{E_j}{d_j^2})=\{\frac{E_j}{d_j^2},\gL\}=}\\
&=&\sum_i \langle \{\frac{E_j}{d_j^2},\gL\},\chi_i\rangle\frac{E_i}{d_i}=\sum_i \langle \frac{E_j}{d_j^2},\chi_is(\chi_i)\rangle\frac{E_i}{d_i^2}=\sum_i \frac{\langle \gL,\chi_is(\chi_i)s(\chi_j)\rangle}{d_j}\frac{E_i}{d_i^2}
\end{eqnarray*}
where the last equality follows from \eqref{dual1}.

(ii) All entries are non-negative rational numbers since the $(i,j)$ entry of $A$ equals the number of times $\chi_j$ appears as a constituent of $\chi_is(\chi_i)$ divided by $d_j.$ 

(iii) Follows from the definition of $A$ since $\chi_0=\gep.$

(iv) The proof follows by induction on $m.$ For $m=1$ this follows from part (ii) since $A^0=\Id.$ Assume $(A^m)_{k0}=\sum_j(A^{m-1})_{kj}$  for all $0\le k\le n-1.$ Then:
$$(A^{m+1})_{i0}=(AA^m)_{i0}=\sum_k A_{ik}(A^{m})_{k0}= \sum_{k,j} A_{ik}(A^{m-1})_{kj}= \sum_j (A^m)_{ij},$$
where the third equality follows from the induction hypothesis.

The last part follows by induction since the first column of $A$ consists of $1$'s and all other entries are non-negative.   
\end{proof}
 We refer to $A$ as the {\bf Commutator matrix} of $H.$ Observe that this matrix  depends only on the Grothendieck ring of $H.$ 

\medskip 

We next define an important family of central iterated commutators which will play a key role in the sequel.

Define,
\begin{equation}\label{gammai}\gamma_0=\gL,\,\gamma_1=T(\gL)=\{\gL,\gL\},\,\dots,\,\gamma_m=T^m(\gL)=\{\gamma_{m-1},\gL\}.\end{equation} 

Note that $\gga_1$ is the Hopf analogue of the {\it extensive commutator} $z$ in $kG$ given by:
\begin{equation}\label{ext}z=\frac{1}{|G|^2}\sum_{a,b\in G} aba\minus b\minus\end{equation}

We show,
\begin{proposition}\label{induction}
Let $\gga_m$ be defined as in \eqref{gammai} and the matrix $A$ be defined as in Proposition \ref{matrixA}. Then $$\gamma_{m}=\sum_i (A^m)_{i0}\frac{E_i}{d_i^2}=\sum_i\left(\sum_j(A^{m-1})_{ij}\right)\frac{E_i}{d_i^2}$$ for all $m\ge 1.$ Moreover, the coefficient of each $E_i$ in $\gamma_m$ is a non-zero rational number, in particular $\gamma_m$ is invertible.  
\end{proposition}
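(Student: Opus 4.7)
The plan is to proceed by induction on $m$, exploiting the fact that by Proposition \ref{matrixA}(i) the matrix $A$ represents the linear operator $T$ with respect to the basis $\{E_i/d_i^2\}_{0\le i\le n-1}$ of $Z(H)$. For the base case $m=1$, I would observe that $\chi_0=\gep$ forces $d_0=\langle\gep,1\rangle=1$, so $\gga_0=\gL=E_0=E_0/d_0^2$ has coordinate vector $(1,0,\dots,0)^T$ in this basis. Applying $T$ produces the first column of $A$, whence $\gga_1=\sum_i A_{i0}E_i/d_i^2$, which matches the claimed formula at $m=1$ (and agrees with the second formula since $\sum_j(A^0)_{ij}=\sum_j\gd_{ij}=1=A_{i0}$ by Proposition \ref{matrixA}(ii)).

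For the inductive step, assume $\gga_m=\sum_i(A^m)_{i0}E_i/d_i^2$. Since the commutator $\{-,\gL\}$ is $k$-linear in its first argument, $T$ is a linear endomorphism of $Z(H)$, and Proposition \ref{matrixA}(i) gives
\[
\gga_{m+1}=T(\gga_m)=\sum_i(A^m)_{i0}\,T\!\left(\frac{E_i}{d_i^2}\right)=\sum_{i,k}A_{ki}(A^m)_{i0}\frac{E_k}{d_k^2}=\sum_k(A^{m+1})_{k0}\frac{E_k}{d_k^2},
\]
which closes the induction. The second formula $(A^m)_{i0}=\sum_j(A^{m-1})_{ij}$ is then nothing but Proposition \ref{matrixA}(iv) substituted back into the expansion.

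For the invertibility assertion, I would invoke the last sentence of Proposition \ref{matrixA}(iv): every entry in the first column of $A^m$ is a positive rational. Consequently each coefficient of $E_i$ in $\gga_m$ is a non-zero rational, and since the $E_i$ are pairwise orthogonal central idempotents of $H$ with $\sum_i E_i=1$, the element $\gga_m$ is a unit in $Z(H)$ with explicit inverse
\[
\gga_m\minus=\sum_i\frac{d_i^2}{(A^m)_{i0}}\,E_i.
\]

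I do not foresee a substantive obstacle: the entire argument is bookkeeping anchored in the identification of $A$ as the matrix of $T$ and in the identities for powers of $A$ already established in Proposition \ref{matrixA}. The only place demanding a moment's attention is the base case, where one must recognise $\gga_0$ as the first basis vector via $d_0=1$ so that the inductive engine can start from $\gga_1=A\cdot(1,0,\dots,0)^T$.
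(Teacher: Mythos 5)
Your proof is correct and follows essentially the same route as the paper: identify $\gL=E_0/d_0^2$ as the first basis vector, note that $T^m$ acts via $A^m$ so that $\gamma_m$ is given by the first column of $A^m$, and then read off positivity and the column-sum identity from Proposition \ref{matrixA}(iv). The only difference is that you spell out the matrix-power bookkeeping as an explicit induction and write the inverse $\sum_i d_i^2(A^m)_{i0}^{-1}E_i$ explicitly, which the paper leaves implicit.
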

\begin{proof}
Since $\gL=\frac{E_0}{d_0^2},$ it follows from Proposition \ref{matrixA}(i) that the coordinates of $T^m(\gL)$ with respect to the basis $\{\frac{E_i}{d_i^2}\}$ is the first column of $A^m.$ The result follows now from Proposition \ref{matrixA}(iv). 
\end{proof}

Note that in particular $\gamma_1=\sum_i\frac{1}{d_i^2}E_i$. This result was proved  also in \cite{cwcom}. 
We show now,

\begin{lemma}\label{le}
Let $\gga_m$ be defined as in \eqref{gammai}. Then:

\medskip\noin (i). For each irreducible character $\chi_i,\;\langle \gamma_m,\chi_i\rangle$ is a non-negative rational number satisfying $\langle \gamma_m,\chi_i\rangle\le d_i.$   In particular
$$\langle \gga_1,\chi_i\rangle =\frac{1}{d_i}.$$

\medskip\noin (ii). $\langle \gamma_{m+1},\chi_i\rangle=d_i$ if and only if $\langle \gamma_m,\chi_j\rangle=d_j$ for each irreducible constituent of $\chi_is(\chi_i).$
\end{lemma}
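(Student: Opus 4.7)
The plan is to derive a single recursion for $\langle \chi_i, \gamma_{m+1}\rangle$ in terms of the $\langle \chi_j, \gamma_m\rangle$ and then induct on $m$; both (i) and (ii) fall out of the same inductive estimate.

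First I would apply formula \eqref{zi} with $h = \gamma_m$. Since by definition $\gamma_{m+1} = \{\gamma_m, \Lambda\}$, this gives
$$\langle \chi_i, \gamma_{m+1}\rangle = \frac{1}{d_i}\,\langle \chi_i s(\chi_i), \gamma_m\rangle.$$
Decomposing $\chi_i s(\chi_i) = \sum_j n_j^i \chi_j$ in $R(H)$, with non-negative integer multiplicities $n_j^i = \dim \Hom_H(V_j, V_i\otimes V_i^*)$, this turns into the driving recursion
$$\langle \chi_i, \gamma_{m+1}\rangle = \frac{1}{d_i}\sum_j n_j^i\,\langle \chi_j, \gamma_m\rangle.$$

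For (i), I would induct on $m$. The base case $m=0$ is $\gamma_0 = \Lambda$, so $\langle \chi_i, \gamma_0\rangle = \delta_{i0}$ by \eqref{dual1}, which is non-negative rational and bounded by $d_i\ge 1$. In the inductive step, non-negativity and rationality of $\langle \chi_i, \gamma_{m+1}\rangle$ are immediate from the recursion and the hypothesis; for the bound $\le d_i$, combine $\langle \chi_j, \gamma_m\rangle \le d_j$ with the dimensional identity
$$\sum_j n_j^i d_j = (\chi_i s(\chi_i))(1) = \chi_i(1)\,s(\chi_i)(1) = d_i^2$$
to conclude $\langle \chi_i, \gamma_{m+1}\rangle \le \frac{1}{d_i}\cdot d_i^2 = d_i$. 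The explicit value $\langle \gamma_1, \chi_i\rangle = 1/d_i$ is the $m=0$ case of the recursion: only $j=0$ survives, and $n_0^i = \dim \End_H V_i = 1$ by Schur's lemma.

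Part (ii) is nothing other than the equality case of the same estimate. Given (i), the bound
$$\langle \chi_i, \gamma_{m+1}\rangle = \frac{1}{d_i}\sum_j n_j^i\,\langle \chi_j, \gamma_m\rangle \;\le\; \frac{1}{d_i}\sum_j n_j^i d_j = d_i$$
is an equality if and only if $\langle \chi_j, \gamma_m\rangle = d_j$ for every $j$ with $n_j^i > 0$, i.e.\ for every irreducible constituent of $\chi_i s(\chi_i)$. The only non-routine inputs are the evaluation-at-$1$ identity $\sum_j n_j^i d_j = d_i^2$ and Schur's $n_0^i = 1$, both entirely standard, so no serious obstacle is anticipated.
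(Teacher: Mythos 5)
Your proof is correct and follows essentially the same route as the paper: the recursion $\langle \chi_i,\gamma_{m+1}\rangle=\frac{1}{d_i}\langle\chi_is(\chi_i),\gamma_m\rangle$ from \eqref{zi}, the identity $\sum_j n_j^i d_j=d_i^2$ for the bound, and the equality case of that estimate for part (ii) are exactly the paper's argument. The only (harmless) difference is that you obtain non-negativity and rationality directly by induction from the recursion, where the paper cites Proposition \ref{induction} instead.
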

\begin{proof}
(i). Proposition \ref{induction} implies that $\langle \gamma_m,\chi_i\rangle$ is a non-negative rational number for all $m.$ 
Also,
For $m=0,\;\langle\gL,\chi_i\rangle=\gd_{i,0}\le d_i.$ For $m=1$ we have $\gamma_1=\sum_i\frac{1}{d_i}E_i^2$ hence $\langle \gamma_1,\chi_i\rangle=\frac{1}{d_i}\le d_i.$ 

Assume  by induction $\langle \gamma_m,\chi_i\rangle\le d_i$ for all $i.$ Then by \eqref{zi},
$$ \langle \gamma_{m+1},\chi_i\rangle=\frac{1}{d_i}\langle \gamma_m,\chi_is(\chi_i)\rangle$$
Let $\chi_is(\chi_i)=\sum m_j\chi_j,$ then $\sum m_jd_j=d_i^2$ and we have:
\begin{equation}\label{leq}\langle \gamma_{m+1},\chi_i\rangle=\frac{1}{d_i}\sum m_j\langle \gamma_m,\chi_j\rangle\le\frac{1}{d_i}\sum m_jd_j= d_i\end{equation}

(ii). If $\langle \gamma_{m+1},\chi_i\rangle=d_i$ then equality holds in \eqref{leq}. Since all $m_j$ are positive we must have $\langle \gamma_m,\chi_j\rangle=d_j$ for each irreducible constituent $\chi_j.$ Conversely, if $\langle \gamma_m,\chi_j\rangle=d_j$ for each irreducible constituent of $\chi_is(\chi_i)$ then equality holds in \eqref{leq}.
\end{proof} 
\medskip
Set 
\begin{equation}\label{s}S_m={\rm Sp}_k\{\chi_i\in {\rm Irr}(H)\,|\,\langle\gga_m,\chi_i\rangle=d_i\}\end{equation}
and
\begin{equation}\label{cs} HS_m=\text{the Hopf subalgebra of $H^*$ generated by }\,S_m.\end{equation}

We show,
\begin{lemma}\label{based} Let $S_m$ and $HS_m$ be defined as above. Then:

\medskip\noin (i) $S_0=k$ and $S_1=kG(H^*).$ 

\medskip If moreover $\chi_is(\chi_i)\in Z(R(H))$ for each irreducible character $\chi_i,$ then:

\medskip\noin (ii)  $S_m$ is a based ring for all $m.$  That is, if $\chi_i,\chi_j\in S_m$ then all the irreducible constituents of $\chi_i\chi_j$ are in $S_m$ as well.

\medskip\noin (iii) $S_m=\{\chi_i\,|\,\chi_is(\chi_i)\in S_{m-1}\}$ hence $(HS_m)_{ad}\subseteq HS_{m-1}.$ 
\end{lemma}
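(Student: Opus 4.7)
I would prove the three parts in the order (i), (iii), (ii); the first two are essentially repackagings of the formulas already established, while (ii) is the substantive statement and requires the centrality hypothesis. For (i), \eqref{dual1} together with $\Lambda = E_0$ and $d_0 = 1$ gives $\langle \chi_i, \gamma_0\rangle = \langle \chi_i, \Lambda\rangle = \delta_{i,0}$, which equals $d_i$ precisely when $i = 0$; hence $S_0 = k\epsilon = k$. Proposition \ref{induction} gives $\gamma_1 = \sum_j E_j/d_j^2$, whence $\langle \gamma_1, \chi_i\rangle = 1/d_i$ by \eqref{dual1}, and $1/d_i = d_i$ forces $d_i = 1$. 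Degree-one irreducible characters are precisely the grouplike elements of $H^*$, so $S_1 = kG(H^*)$.

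For (iii), Lemma \ref{le}(ii) states that $\langle \gamma_m, \chi_i\rangle = d_i$ if and only if $\langle \gamma_{m-1}, \chi_j\rangle = d_j$ for every irreducible constituent $\chi_j$ of $\chi_i s(\chi_i)$. Rephrased, $\chi_i \in S_m$ iff every irreducible constituent of $\chi_i s(\chi_i)$ lies in $S_{m-1}$, which is the asserted description of $S_m$. The inclusion $(HS_m)_{ad} \subseteq HS_{m-1}$ is then immediate: the Hopf subalgebra $(HS_m)_{ad}$ is generated by the irreducible constituents of $\chi_i s(\chi_i)$ as $\chi_i$ runs over $S_m$, and each such constituent lies in $S_{m-1} \subseteq HS_{m-1}$.

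The main content is (ii), which I would prove by induction on $m$. The cases $m = 0$ and $m = 1$ follow from (i), since $G(H^*)$ is a group under multiplication. For the inductive step, assume $S_{m-1}$ is a based ring and take $\chi_i, \chi_j \in S_m$. Using centrality of $\chi_j s(\chi_j)$ in $R(H)$,
$$\chi_i \chi_j s(\chi_j) s(\chi_i) = \bigl(\chi_i s(\chi_i)\bigr)\bigl(\chi_j s(\chi_j)\bigr).$$
By (iii) both factors lie in $HS_{m-1}$, and by the inductive hypothesis $S_{m-1}$ is closed under products of irreducibles, so every irreducible constituent of the right-hand side lies in $S_{m-1}$. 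On the other hand, writing $\chi_i \chi_j = \sum_k m_{ij}^k \chi_k$ yields
$$\chi_i \chi_j s(\chi_j) s(\chi_i) = \sum_{k,l} m_{ij}^k m_{ij}^l\, \chi_k s(\chi_l),$$
a non-negative integer combination of the products $\chi_k s(\chi_l)$, each itself a non-negative integer combination of irreducibles. No cancellation is possible, so every $\chi_k s(\chi_l)$ with $m_{ij}^k m_{ij}^l > 0$ has all constituents in $S_{m-1}$. Taking $l = k$ for each $k$ with $m_{ij}^k > 0$ gives $\chi_k s(\chi_k) \in HS_{m-1}$, and (iii) then forces $\chi_k \in S_m$, completing the induction.

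The main obstacle is isolating the diagonal terms $\chi_k s(\chi_k)$ from the off-diagonal $\chi_k s(\chi_l)$: this is precisely where the non-negative integer structure of $R(H)$ enters, ensuring no cancellation hides a constituent outside $S_{m-1}$. The centrality hypothesis is used only once, but crucially, to split $\chi_i \chi_j s(\chi_j) s(\chi_i)$ as a product of two adjoint-type expressions whose behavior is controlled by (iii).
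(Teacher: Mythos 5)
Your argument is essentially the paper's own proof: (i) is identical, and for (ii) you run the same induction, using centrality to rewrite $\chi_is(\chi_i)\chi_js(\chi_j)=\chi_i\chi_js(\chi_i\chi_j)$ and the non-negativity of fusion multiplicities to extract the diagonal terms $\chi_ks(\chi_k)$ from $\sum_{k,l}m_{ij}^km_{ij}^l\,\chi_ks(\chi_l)$, then concluding $\chi_k\in S_m$ via Lemma \ref{le}(ii).

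The one point to repair is your ordering. You establish the inclusion $(HS_m)_{ad}\subseteq HS_{m-1}$ before proving (ii), asserting that $(HS_m)_{ad}$ is generated by the irreducible constituents of $\chi_is(\chi_i)$ as $\chi_i$ runs over $S_m$. But by the definition underlying \eqref{b}, $(HS_m)_{ad}$ is built from $\chi s(\chi)$ over \emph{all} irreducible characters of the Hopf subalgebra $HS_m$, and a priori $HS_m$ (being generated by $S_m$ as a Hopf algebra) could contain irreducible characters not lying in the span $S_m$; for such a $\chi$ there is no control on the constituents of $\chi s(\chi)$. It is precisely part (ii) (together with the facts that $\gep\in S_m$ and $S_m$ is stable under $s$) that forces ${\rm Irr}(HS_m)={\rm Irr}(H)\cap S_m$, which is why the paper deduces this inclusion from (ii) and Lemma \ref{le}(ii) rather than from Lemma \ref{le}(ii) alone. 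Since your proof of (ii) uses only the first claim of (iii) (the description of $S_m$), the fix is simply to defer the $(HS_m)_{ad}\subseteq HS_{m-1}$ statement until after (ii); everything else stands.
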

\begin{proof}
(i). Since $\gga_0=\gL$ we have $S_0=k.$ By Lemma \ref{le}.1, $\langle \gamma_1,\chi_i\rangle=\frac{1}{d_i},$ which equals $d_i$ if and only if $d_i=1,$ that is if and only if $\chi_i\in G(H^*).$ 

\medskip (ii). 
Observe that centrality of $\chi_js(\chi_j)$ implies that $$\chi_is(\chi_i)\chi_js(\chi_j)=\chi_i\chi_js(\chi_i\chi_j).$$

Clearly $S_1=kG(H^*)$ is  a based ring. Assume by induction that $S_{t-1}$ is a based ring, and let  $\chi_i,\chi_j\in S_t.$ By Lemma \ref{le} we have that $\langle \gga_{t-1},\chi_k\rangle=d_k$ for each irreducible constituent $\chi_k$ of $\chi_is(\chi_i)$ or $\chi_js(\chi_j),$    hence by the induction hypothesis,  we have  $\chi_is(\chi_i)\chi_js(\chi_j)\in S_{t-1}.$  Let $\chi_l$ be an irreducible constituent of $\chi_i\chi_j.$ Since all constituents of $\chi_ls(\chi_l)$ are constituents of $\chi_i\chi_js(\chi_i\chi_j)=\chi_is(\chi_i)\chi_js(\chi_j)\in S_{t-1},$ it follows that   $\chi_l\in S_t.$  
Hence $S_t$ is a based ring. 

\medskip (iii). Follows directly from part (ii) and Lemma \ref{le}(ii).
\end{proof}
\begin{rema} When $R(H)$ is commutative the series of based rings $S_m$ given in \eqref{s} coincides with the lower series defined in  \cite[4.12]{gn}. However the assumption $\chi_is(\chi_i)\in Z(R(H))$ is  weaker than the assumption of commutativity of $R(H).$   For example, if $H=(kG)^*,\;G$ a non-abelian finite group then $R(H)=kG$ is not commutative, yet $\chi_is(\chi_i)=s(\chi_i)\chi_i=1.$ Same is true for $H=D(kG)^*.$
\end{rema}

\medskip
We can show now the main result of this section.

\begin{theorem}\label{connection}
Let $H$ be a semisimple Hopf algebra over an algebraically closed field of characteristic $0,$ and let $N_t,\,\gga_t$ be defined as in \eqref{N} and \eqref{gammai}. Assume $\chi_is(\chi_i)\in Z(R(H))$ for each irreducible character $\chi_i.$ Then $H$ is nilpotent if and only if $\gga_m=1$ for some $m\in\Z^+.$  

Its index of nilpotency is  the least integer $m$ so that $\gga_m=1.$

\end{theorem}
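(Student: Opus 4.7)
The plan is to prove the two implications of the biconditional separately, then extract the index statement by combining them. The groundwork is already in place: Theorem~\ref{equal} links $Z_t=H$ to $N_t=k1$, equation~\eqref{tnm} gives $\gga_m\in N_m$, and Lemma~\ref{based}(iii) supplies the crucial $(HS_m)_{ad}\subseteq HS_{m-1}$. What remains is to (a) translate ``$\gga_m=1$'' into a statement about $HS_m$ and iterate Lemma~\ref{based}(iii) down to $B_m$, and (b) pin down the scalar in the converse direction.

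For the implication $\gga_m=1\Longrightarrow H$ nilpotent, first observe that since $1=\sum_i E_i$, the dual-basis formulas \eqref{dual1} give $\langle\chi_i,1\rangle=d_i$ for every $i$. Hence $\gga_m=1$ is equivalent to $\langle\chi_i,\gga_m\rangle=d_i$ for all $i$, i.e.\ $S_m=R(H)$ and $HS_m=H^*$. Applying Lemma~\ref{based}(iii) then gives
$$B_1=(H^*)_{ad}=(HS_m)_{ad}\subseteq HS_{m-1}.$$
Since $B\subseteq C$ implies $B_{ad}\subseteq C_{ad}$ for Hopf subalgebras (the generators $\chi s(\chi)$ of $B_{ad}$ are a subset of those of $C_{ad}$), iterating Lemma~\ref{based}(iii) yields, by induction on $k$, the chain $B_k\subseteq HS_{m-k}$ for $1\le k\le m$. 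At $k=m$ this gives $B_m\subseteq HS_0=k$ (using $S_0=k$ from Lemma~\ref{based}(i)), so $B_m=k$. By Lemma~\ref{zn}, $Z_m=H^{B_m}=H^k=H$, so $H$ is nilpotent with index at most $m$.

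For the converse implication, if $H$ is nilpotent with $Z_m=H$ then Theorem~\ref{equal} gives $N_m=k1$, and \eqref{tnm} yields $\gga_m=T^m(\gL)\in N_m=k1$. Thus $\gga_m=c\cdot 1$ for some $c\in k$, and it remains to show $c=1$. Using multiplicativity of $\gep$, the $z$-factors and $\gL$-factors of $T(z)=\sum z_1\gL_1Sz_2S\gL_2$ separate under $\gep$, giving $\gep(T(z))=\gep(z)\gep(\gL)=\gep(z)$ for every $z\in Z(H)$; iterating, $\gep(\gga_m)=\gep(\gL)=1$, which forces $c=1$. Finally, for the index statement, let $n$ be the nilpotency index of $H$ and $m^*$ the least $m$ with $\gga_m=1$. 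The converse direction applied to $m=n$ gives $\gga_n=1$, so $m^*\le n$; the forward direction applied to $m=m^*$ gives that $H$ is nilpotent of index at most $m^*$, so $n\le m^*$. Hence $n=m^*$.

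The main obstacle lies in the forward direction: recognizing $\gga_m=1$ as the statement $HS_m=H^*$ and correctly chaining the inclusions $B_k\subseteq HS_{m-k}$ via Lemma~\ref{based}(iii), together with the monotonicity of the $_{ad}$ operation on Hopf subalgebras. Once this is in hand, the converse and the index identification are routine consequences of the machinery already developed in the excerpt.
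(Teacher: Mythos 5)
Your proposal is correct and follows essentially the same route as the paper: the forward direction is the same induction $B_k\subseteq HS_{m-k}$ driven by Lemma~\ref{based}(iii) and concluded via Lemma~\ref{zn}, and the converse uses Theorem~\ref{equal} together with $\gga_m\in N_m$. You merely fill in two details the paper leaves implicit — the monotonicity of $(\,\cdot\,)_{ad}$ and the counit argument pinning the scalar to $1$ — both of which are fine.
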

\begin{proof}
The proof follows by induction. Let $\{B_t\}$ be the series defined in \eqref{b}. If $\gga_m=1$ then $S_m=R(H)$ and by Lemma \ref{based}.(iii) we have $(H^*)_{ad}=(HS_m)_{ad}\subseteq HS_{m-1},$ hence $B_1\subset HS_{m-1}.$   Assume by induction $B_t\subseteq HS_{m-t}.$ Then
$$B_{t+1}=(B_t)_{ad}\subseteq (HS_{m-t})_{ad}\subseteq HS_{m-t-1},$$
where the last inclusion follows from Lemma \ref{based}.(iii). We have in particular $B_m\subseteq HS_0=k,$ which implies by Lemma \ref{zn} that $Z_m=H.$ Thus $H$ is nilpotent.  Conversely, if $H$ is nilpotent of degree $m$ then $N_m=k$ implying in particular that $\gga_m=1.$
\end{proof}

As a corollary we obtain,
\begin{coro}\label{ntilde}
Let $H$ be a semisimple Hopf algebra over an algebraically closed field of characteristic $0$ and $\{N_t\}$ be defined as in \eqref{N}. Assume $\chi_is(\chi_i)\in Z(R(H))$ for each irreducible character $\chi_i.$ Then 
$$N_t=\text{The left coideal subalgebra generated by }\,\gga_t.$$
\end{coro}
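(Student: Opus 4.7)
The plan is to prove $N_t = M_t$ in two directions, where $M_t := [\gga_t]$ denotes the normal left coideal subalgebra generated by $\gga_t$. The inclusion $M_t \subseteq N_t$ is immediate from \eqref{tnm}: since $\gga_t = T^t(\gL) \in N_t$ and $N_t$ is itself a normal left coideal subalgebra of $H$, it contains $[\gga_t] = M_t$.

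For the reverse inclusion $N_t \subseteq M_t$, I would pass to the Hopf quotient $\ol H := H//M_t$ with projection $\ol\pi : H \to \ol H$. A routine induction using $\gep(\{a,b\}) = \gep(a)\gep(b)$ together with $\gep(\gL)=1$ gives $\gep(\gga_t) = 1$, so $\gga_t - 1 \in M_t^+$ and therefore $\ol\pi(\gga_t) = 1$. Since the construction of $\gga_m$ uses only the integral, multiplication, and antipode, and $\ol\pi(\gL)$ is the idempotent integral of $\ol H$, by functoriality the element $\ol{\gga_t}$ defined intrinsically in $\ol H$ coincides with $\ol\pi(\gga_t) = 1$. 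The centrality hypothesis passes to $R(\ol H) \subseteq R(H)$, so Theorem \ref{connection} applied to $\ol H$ gives that $\ol H$ is nilpotent of index at most $t$; Theorem \ref{equal} then yields $\ol{N_t} = k$ in $\ol H$.

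Next I would prove $\ol\pi(N_s) = \ol{N_s}$ for every $s$ by induction on $s$, using that $\ol\pi$ commutes with every operation appearing in the definition of $N_s$. The forward inclusion follows because the image of a normal left coideal subalgebra under a Hopf surjection is again a normal left coideal subalgebra of the target; the reverse because the preimage $\ol\pi^{-1}(\ol{N_s})$ is a normal left coideal subalgebra of $H$ containing $\{N_{s-1},\gL\}$ by the inductive hypothesis, hence contains $N_s = [\{N_{s-1},\gL\}]$. Specializing to $s=t$ gives $\ol\pi(N_t) = \ol{N_t} = k$. For every $n \in N_t$ we then have $\ol\pi(n) = \gep(n)\cdot 1$, and since $N_t$ is a left coideal we obtain $\sum n_1 \otimes \ol\pi(n_2) = \sum n_1 \otimes \gep(n_2) = n \otimes 1$, which places $n$ in $H^{co\ol\pi} = M_t$.

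The principal obstacle is the functoriality bookkeeping above: one must verify cleanly that $\ol\pi$ transports the iterated commutator series of $H$ onto that of $\ol H$, and that the centrality assumption descends to $R(\ol H)$, so that the intrinsic obstruction $\ol{\gga_t} = 1$ can indeed be converted via Theorems \ref{connection} and \ref{equal} into the vanishing $\ol{N_t} = k$ that pins $N_t$ inside $M_t$.
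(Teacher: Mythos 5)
Your argument is essentially the paper's own proof: both pass to the Hopf quotient of $H$ by the (normal) left coideal subalgebra generated by $\gga_t$, observe that $\gga_t$ maps to $1$ there and that the centrality hypothesis descends to the quotient, invoke Theorem \ref{connection} (with Theorem \ref{equal}) to get $N_t(\ol{H})=k$, push $N_t$ forward by induction, and conclude $N_t\subseteq H^{co\pi}$. The only cosmetic difference is that the paper takes the left coideal subalgebra generated by $\gga_t\leftharpoonup H^*$ and separately verifies it is normal using the centrality of $\gga_t$ and \cite[Prop.~2.5]{cw1}, which also reconciles your normal closure $M_t$ with the statement's ``left coideal subalgebra generated by $\gga_t$.''
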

\begin{proof}
Let $L_t$ denote the left coideal subalgebra generated by $\gga_t\leftharpoonup H^*.$ Then $L_t\subseteq N_t.$ Since $\gga_t$ is central it follows by \cite[Prop. 2.5]{cw1} that $L_t$ is also left normal. Let $\pi:H\rightarrow \ol{H}=H//L_t.$ Since ${\rm Irr}(\ol{H})\subset {\rm Irr}(H)$ the cetrality assumption holds for $\ol{H}$ as well. Then $\gga_t(\ol{H})=\pi(\gga_t)=1.$ By Theorem \ref{connection}, $N_t(\ol{H})=k.$ Since $\pi(\gL)=\ol{\gL}$ it follows by induction that $\pi(N_t)\subseteq N_t(\ol{H})=k.$ Since $N_t$ is a normal left coideal subalgebra and $L_t=H^{co\pi}$ we must have $N_t\subseteq L_t.$
\end{proof}

Another corollary relates nilpotency of $H$ and the eigenvalues of $A.$ 
\begin{theorem}\label{atothm}
Let $H$ be a semisimple Hopf algebra over an algebraically closed field of characteristic $0$ and assume $\chi_is(\chi_i)\in Z(R(H))$ for each irreducible character $\chi_i.$ Then $H$ is nilpotent  if and only if its commutator matrix $A$ has eigenvalues $\{1,0\}$ where the algebraic multiplicity of $1$ is $1.$
\end{theorem}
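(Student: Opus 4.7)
The plan is to translate the theorem into linear algebra using Theorem \ref{connection} (nilpotency $\Leftrightarrow$ $\gga_m=1$ for some $m$) together with Proposition \ref{induction} (which says $\gga_m$ has coordinates $\bigl((A^m)_{i0}\bigr)_i$ in the basis $\{E_i/d_i^2\}$). Since $1=\sum_i E_i$ has coordinates $v:=(d_0^2,\dots,d_{n-1}^2)^T$ in this basis and $\gL=E_0/d_0^2$ has coordinates $e_0:=(1,0,\dots,0)^T$, the condition $\gga_m=1$ reads $A^m e_0=v$. A direct computation shows $T(1)=\{1,\gL\}=\sum\gL_1 S\gL_2=1$, so $Av=v$; and Proposition \ref{matrixA}(iii) gives $e_0^T A=e_0^T$. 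Hence $v$ and $e_0$ are right and left $1$-eigenvectors of $A$, normalized so that $e_0^T v=d_0^2=1$.

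For the forward direction, suppose $H$ is nilpotent, so $\gga_m=1$, hence $\gga_{m+1}=T(1)=1$, for some $m$. Combining Proposition \ref{matrixA}(iv) with $(A^{m+1})_{i0}=d_i^2=(A^m)_{i0}$ yields $\sum_{j\ge 1}(A^m)_{ij}=0$. Because $A$ (and hence $A^m$ by induction) has nonnegative entries by Proposition \ref{matrixA}(ii), this forces $(A^m)_{ij}=0$ for all $j\ge 1$. Thus $A^m=ve_0^T$, a rank-$1$ matrix, and $A^{m+1}=A(ve_0^T)=ve_0^T=A^m$. So $A$ is annihilated by $x^m(x-1)$, the eigenvalues lie in $\{0,1\}$, $1$ is a simple root of the minimal polynomial, and the image of $A^m$ equals the generalized $1$-eigenspace; consequently the algebraic multiplicity of $1$ equals $\operatorname{rank}(A^m)=1$.

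For the converse, suppose the spectrum of $A$ is $\{0,1\}$ with $1$ of algebraic multiplicity one. Since $v\ne 0$ is a $1$-eigenvector, the geometric multiplicity of $1$ is also one, the Jordan block at $1$ has size $1$, and the minimal polynomial has the form $x^k(x-1)$ for some $k$. Therefore $A^m$ stabilizes for $m\ge k$ to an idempotent $P$ of rank $1$ that projects onto $kv$ along the generalized $0$-eigenspace. Using the biorthogonal pair $(v,e_0)$, with $e_0^T v=1$, the spectral projection is $P=v\,e_0^T$; in particular its first column is $v$. Thus $A^m e_0=v$ for all $m\ge k$, i.e.\ $\gga_m=1$, and Theorem \ref{connection} yields nilpotency.

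The main obstacle is the nonnegativity step in the forward direction: $\gga_m=1$ only directly constrains the first column of $A^m$, and it is the combination of nonnegativity with the column-summing identity of Proposition \ref{matrixA}(iv) that forces the other columns to vanish. This is what promotes a one-orbit condition into a statement about the full spectrum of $A$, thereby converting Theorem \ref{connection} into the spectral criterion.
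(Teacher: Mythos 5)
Your argument is correct, and it reaches the same pivotal identity as the paper --- that nilpotency forces $A^m$ to equal the rank-one matrix $ve_0^T$ whose first column is $(1,d_1^2,\dots,d_{n-1}^2)^T$ and whose other columns vanish --- but by a genuinely different route in the forward direction. The paper gets there via \eqref{tnm}: since $T^m(z)\in N_m$ for every $z\in Z(H)$ and $N_m=k$ when $H$ is nilpotent of degree $m$, one has $T^m=\gep_{|Z(H)}$ outright, which is exactly the statement that \emph{every} column of $A^m$ collapses, not just the first; the spectral conclusion then follows as in your last step. You instead use only the scalar condition $\gga_m=\gga_{m+1}=1$ of Theorem \ref{connection}, i.e.\ only the first column of $A^m$ and $A^{m+1}$, and recover the vanishing of the remaining columns from the column-sum identity of Proposition \ref{matrixA}(iv) together with nonnegativity of the entries. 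This is a nice trade: the paper's route is shorter but leans on the chain $\{N_t\}$ and the implication ``nilpotent $\Rightarrow N_m=k$,'' whereas yours stays entirely inside the combinatorics of the commutator matrix and makes transparent why a condition on a single orbit $A^m e_0$ controls the whole spectrum --- precisely the point you flag at the end. Your converse is also more complete than the paper's: the paper asserts the equivalence ``nilpotent iff $A^m$ has the displayed form'' and then only argues the spectral consequences in one direction, while you identify the stable power $A^m$ with the spectral projection $P=ve_0^T$ using the biorthogonal left/right $1$-eigenvectors $e_0$ and $v$ (with $e_0^Tv=1$), which is the step needed to descend from the eigenvalue hypothesis back to $\gga_m=1$. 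No gaps; the only cosmetic caveat, shared with the paper, is the degenerate case $n=1$ where the eigenvalue $0$ does not occur.
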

\begin{proof}
Theorem \ref{connection} and \eqref{tnm} imply that $H$ is nilpotent of degree $m$ if and only if the operator  $T$ satisfies $T^m=\gep_{|Z(H)}.$ That is, $T^m(E_i)=\langle \gep,E_i\rangle 1=\gd_{i,0}$ for each central idempotent $E_i.$   By Proposition \ref{matrixA} this is equivalent to:
$$A^{m}=
\left(\begin{array}{clrr}
1&0&\cdots&0\\
d_1^2&0&\cdots&0\\
\vdots&\vdots&\;&\vdots\\
d_{n-1}^2&0&\cdots&0	
\end{array}\right)
$$
It follows that the eigenvalues of $T^m$ are $1$ and $0.$   Hence the eigenvalues of $T$ are roots of unity and $0.$ Since $1$ is an eigenvector of $T$ with eigenvalue $1,$  and  the alegbraic multiplicity of $1$ in $T^m$ is $1,$ it follows that the only possible root of unity is $1$ and its alegbraic multiplicity is $1.$  
\end{proof}

\medskip Upon another assumption on $R(H),$ which holds in particular when $R(H)$ is commutative, yet it is a weaker assumption, we show:

\begin{proposition}\label{xsx}
Assume $\chi_is(\chi_i)=s(\chi_i)\chi_i$ for each irreducible character $\chi_i.$ Then:

\medskip\noin (i).  For all $m,\;\gamma_m=S\gamma_m$ 

\medskip\noin (ii). For all $m,\;\{\gL,\gga_m\}=\{\gga_m,\gL\}.$

\medskip\noin (iii).  $\gga_{m+1}\in \gga_{m}\leftharpoonup H^*$ for all $m\ge 0.$
\end{proposition}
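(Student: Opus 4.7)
The plan is to handle the three parts in order, with (i) supplying the $S$-invariance that powers both (ii) and (iii).

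For (i), I proceed by induction on $m$, with base case $S\gga_0 = S\gL = \gL$ (the idempotent integral of a semisimple Hopf algebra is $S$-invariant). For the inductive step, I apply the character formula \eqref{zi} twice: first with $\chi_i$, giving $\langle \chi_i,\gga_{m+1}\rangle = \frac{1}{d_i}\langle \chi_is(\chi_i),\gga_m\rangle$, and again with $s(\chi_i)$ in place of $\chi_i$, which (using $s^2=\operatorname{id}$ and $d_{i^*}=d_i$) gives $\langle s(\chi_i),\gga_{m+1}\rangle = \frac{1}{d_i}\langle s(\chi_i)\chi_i,\gga_m\rangle$. The hypothesis $\chi_is(\chi_i)=s(\chi_i)\chi_i$ equates these pairings. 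Since central elements are determined by their pairing against the characters (the bases $\{\chi_i\}$ and $\{E_i/d_i\}$ are dual), this forces $S\gga_{m+1}=\gga_{m+1}$.

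For (ii), with $\gL$ and $\gga_m$ now both $S$-invariant, I compare the two commutators through characters. Expanding $\chi_i(\{\gL,\gga_m\})$ and $\chi_i(\{\gga_m,\gL\})$ with the help of the threefold coproduct $\Delta^3\chi_i = \sum \chi_i^{(1)}\otimes\chi_i^{(2)}\otimes\chi_i^{(3)}\otimes\chi_i^{(4)}$ reduces each to a sum of pairings of the form $\langle \chi_i^{(a)}s(\chi_i^{(c)}),\gL\rangle\langle \chi_i^{(b)}s(\chi_i^{(d)}),\gga_m\rangle$. Because $\chi_i$ is a trace of a matrix representation, $\Delta^3\chi_i$ is invariant under cyclic permutation of its four tensor factors; a single cyclic shift, combined with the antipode identities $s(pq)=s(q)s(p)$ and $\langle s(p),x\rangle=\langle p,Sx\rangle$ together with the $S$-invariance of $\gL$ and $\gga_m$, converts one expression into the other. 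Both commutators lie in $Z(H)$: $\{\gga_m,\gL\}$ by \cite[Lemma 2.3]{cwcom}, and $\{\gL,\gga_m\}$ by grouping the $\gL$-Sweedler indices and applying the projection formula $\gL\ad h = \sum_i \chi_i(h)E_i/d_i$ (the character hit action preserves $Z(H)$). Since characters separate $Z(H)$, the two commutators coincide.

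For (iii), I start from the identity $\gga_{m+1}=\{\gga_m,\gL\}=\sum \gga_{m,1}(\gL\ad S\gga_{m,2})$, obtained by regrouping the $\gL$-Sweedler indices. Substituting the projection formula yields $\gga_{m+1} = \sum_i \frac{E_i}{d_i}\bigl(\sum\langle s(\chi_i),\gga_{m,2}\rangle\gga_{m,1}\bigr)$. Each inner sum lies in the span of the ``first-factor'' Sweedlers of $\gga_m$, which by the $S$-invariance from (i) coincides with $S(\gga_m\leftharpoonup H^*)$; using $S\gga_{m+1}=\gga_{m+1}$, membership in that span is equivalent to membership in $\gga_m\leftharpoonup H^*$. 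The remaining task is to absorb the central multipliers $E_i/d_i$: since $\gga_m$ is invertible and central, each $E_i$ can be realized as the image of $\gga_m$ under a suitable functional in $H^*$, so the weighted sum collapses into a single element of $\gga_m\leftharpoonup H^*$. The main obstacle will be this last absorption step; parts (i) and (ii) reduce, once the cyclic symmetry of $\Delta^3\chi_i$ and the duality of characters and central idempotents are recognized, to essentially mechanical manipulations.
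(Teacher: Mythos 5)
Parts (i) and (ii) of your proposal are essentially sound. Your (i) is the paper's argument in dual form: the paper proves $A_{ij}=A_{i^*j}$ and reads the symmetry of the coefficients of $\gga_m$ off the first column of $A^m$, while you pair $\gga_{m+1}$ against $\chi_i$ and $s(\chi_i)$ via \eqref{zi}; the two computations are identical in content (and your induction is not even needed, since \eqref{zi} gives $\langle\chi_i,\gga_{m+1}\rangle=\langle s(\chi_i),\gga_{m+1}\rangle$ directly from the hypothesis). Your (ii) reaches the right conclusion by a genuinely different and much heavier route. The paper's proof is one line: $\{\gga_m,\gL\}=\sum(\gga_m)_1\,(\gL\ad S(\gga_m)_2)$, the adjoint factor is central and can be slid past $(\gga_m)_1$, and $S\gga_m=\gga_m$ (equivalently $\sum(\gga_m)_1\ot(\gga_m)_2=\sum S(\gga_m)_2\ot S(\gga_m)_1$) then converts the result into $\{\gL,\gga_m\}$. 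Your character-theoretic comparison additionally obliges you to prove that $\{\gL,\gga_m\}$ is central before equality of character values can imply equality of elements; this can be patched (write $\{\gL,\gga_m\}=\sum_i d_i\minus E_i\,S(\gga_m\leftharpoonup\chi_i)$ and use that $Z(H)=\gL\leftharpoonup R(H)$ is stable under $\leftharpoonup R(H)$), but the direct Sweedler computation makes all of this unnecessary.

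The genuine gap is in (iii), exactly where you flag it. Your expansion gives $\gga_{m+1}=\sum_i d_i\minus E_i\,(s(\chi_i)\rightharpoonup\gga_m)$, a sum of products of central idempotents with elements of the left coideal $H^*\rightharpoonup\gga_m=S(\gga_m\leftharpoonup H^*)$. A left coideal is not closed under multiplication by $Z(H)$, and your proposed absorption --- realizing each $E_i$ as $\gga_m\leftharpoonup p_i$ for a suitable functional $p_i$ --- fails twice over: first, $\gga_m\leftharpoonup H^*$ need not contain the $E_i$ at all (if $\gga_m=1$, as happens for nilpotent $H$, then $\gga_m\leftharpoonup H^*=k1$); second, even when it does, $\gga_m\leftharpoonup H^*$ is a coideal, not a subalgebra, so products of its elements have no reason to remain inside it. The ingredient you are missing is the one the paper substitutes for the $E_i$-expansion: since $\gga_m$ is central, the left coideal $\gga_m\leftharpoonup H^*$ is normal, i.e.\ stable under $h\mapsto\gL\ad h$ (\cite[Prop.\ 2.5]{cw1}, the same fact invoked in Corollary \ref{ntilde}). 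Writing $\gga_{m+1}=S\gga_{m+1}=\sum(\gL\ad(\gga_m)_2)\,S(\gga_m)_1$ keeps both factors inside $\gga_m\leftharpoonup H^*$ --- the adjoint factor by normality, and $S(\gga_m)_1$ by part (i) --- and never introduces the idempotents; that is the route you need.
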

\begin{proof}
(i). If $\chi_is(\chi_i)=s(\chi_i)\chi_i$ for each irreducible character $\chi_i$ then the matrix $A$ defined in \eqref{A} satisfies $A_{ij}=A_{i^*j}$ for all $i,j.$ Assume by induction $(A^m)_{ij}=(A^m)_{i*j},$ then
$$(A^{m+1})_{il}=\sum_j (A^m)_{i^*j}A_{jl}=(A^{m+1})_{i^*l}.$$
By Proposition \ref{induction} the expression above implies that the coefficient of $E_i$ in $\gamma_m$ equals the coefficient of $E_{i^*}$ in $\gamma_m,$ hence $\gamma_m=S\gamma_m.$ 

(ii).  Since $\gamma_m=S\gamma_m$ we have   $\sum {\gga_m}_1\ot {\gga_m}_1=\sum S{\gga_m}_2\ot S{\gga_m}_1.$ Let $\gga=\gga_m,$ Since $\gL\ad h\in Z(H)$ for all $h,$ we have,
$$\sum\gga_1\gL_1S\gga_2S\gL_2=\sum\gL_1S\gga_2S\gL_2\gga_1=\sum\gL_1\gga_1S\gL_2S\gga_2.$$

(iii). We have
$$\gamma_{m+1}=S\gamma_{m+1}=\sum \gL_2\gamma_{m_2}S\gL_1S\gamma_{m_1}=\sum(\gL\ad {\gamma_m}_2)S{\gamma_m}_1.$$
Since $\gga_{m}\leftharpoonup H^*$ is stable under the adjoint action of $H$ (by \cite{cw1}) and since ${\gga_m}_2, {S\gamma_m}_1\in \gga_m\leftharpoonup H^*,$ it follows that $\gga_{m+1}\in\gga_m\leftharpoonup H^*.$ 
\end{proof}

\section{Probabilistic methods for Hopf algebras with a commutative character algebra.}

Recall that $H$ is a Frobenius algebra. One defines a Frobenius map $\Psi
:H_{H^*}\rightarrow H^*_{H^*}$ by
\begin{equation}\label{psi}\Psi(h)=\gl\leftharpoonup S(h)\qquad \Psi\minus(p)=\gL\leftharpoonup p.\end{equation}
where $H^*$ is a right $H^*$-module under multiplication and $H$ is a right $H^*$-module under right {\it hit}. If $H$ is semisimple then
$$\Psi(Z(H))=R(H).$$

 \medskip  Let $\frac{1}{d}\gl=F_0,\dots, F_{n-1}$ be the set of central primitive idempotents of $R(H).$ When $R(H)$ is commutative then $\{F_j\}$   form another basis for $R(H).$
Define as in \cite{cwturki} the {\bf conjugacy class} ${\mathfrak C}_i$ as:
$${\mathfrak C}_i=\gL\leftharpoonup F_iH^*.$$

Note $\dim {\mathfrak C}_i=\dim(F_iH^*).$ We generalize also the notions of  {\bf Class sum}  and of a representative of a conjugacy class as follows:
\begin{equation}\label{ci} C_i=\gL\leftharpoonup dF_i=\Psi\minus(dF_i)\qquad \eta_i=\frac{C_i}{\dim {\mathfrak C}_i}.\end{equation}
We refer to $\eta_i$ as a  {\bf normalized class sum}. 
In \cite[(11)]{cwturki} we show that 
\begin{equation}\label{filamda}\langle F_i,\gL\rangle =\frac{\dim{\mathfrak C}_i}{d}.\end{equation}
Hence $\langle\gep,\eta_i\rangle =1$ for all $i$ and the normalized class sums $\{\eta_i\}$ form a basis for $Z(H)$ dual to  $\{F_i\}.$  
We can define now a generalized character table  for $H,$

$$(\xi_{ij})\quad\text{where }\;\xi_{ij}=\left\langle  \chi_i,\eta_j\right\rangle,$$
$0\le i,j\le n-1.$
Note that $\eta_0=1$ and so $\xi_{i0}=\langle  \chi_i,1\rangle  =d_i.$  Moreover, $(\xi_{ij})$ is the change of bases matrix between $\{\chi_i\}$ and $\{F_i\}.$

\bigskip

We call an element $z\in Z(H)$  a {\bf central distribution element} if 
$$z=\sum\ga_i\eta_i,\;\ga_i\in \R^+\cup\{0\},\quad \sum_i\ga_i=1.$$  Note that since $\langle\gep,\eta_i\rangle=1$ for all $i,$ it follows that  $\sum_i\ga_i=1$ is equivalent to $\langle\gep,z\rangle=1.$ 

The central distribution element $z$ defines a  distribution $X_z$ on $H$ by letting:
$${\rm Prob}(X_z=C_i)=\ga_i$$
then the corresponding distribution function $f_z$ is given by:
$$f_z(C_i)={\rm Prob}(X_z=C_i)=\ga_i.$$

Since $\{C_i\}$ form a basis for $R(H)$ it follows that the distribution function  $f_z$ defines an element  in $\hom(Z(H),\C)=R(H).$ By using dual bases this element is given precisely by 
\begin{equation}\label{fz}f_z=\sum_i f_z(C_i)\frac{F_i}{\dim{\mathfrak C}_i}=\sum\ga_i \frac{F_i}{\dim{\mathfrak C}_i}=\frac{1}{d}\Psi(z),\end{equation}
where $\Psi$ is the Frobenius  function given in \eqref{psi} and the last equality follows from \eqref{ci}.

\begin{example}\label{uniform}
(I) The idempotent integral $\gL$ is a central distribution element since by \eqref{filamda} and  dual bases, 
$$\gL=\sum_i\frac{\dim{\mathfrak C}_i}{d}\eta_i.$$ 
The corresponding  distribution $X_{\gL}$ is given by: $${\rm Prob}(X_{\gL}=C_i)=\frac{\dim{\mathfrak C}_i}{d}.$$ (For groups this is called the uniform distribution on the group since it satisfies Prob$(X_{\gL}=g)=\frac{1}{|G|}$ for all $g\in G$). 

\medskip\noin(II) Another central distribution element is the normalized class sum $\eta_i.$ It defines the distribution $X_i$, where
${\rm Prob}(X_i=C_j)=\gd_{ij}.$ 

\medskip\noin (III) The character of the adjoint representation is related to the the following central distribution element. Let  $z_{ad}=\frac{1}{n}\sum\eta_i.$ Then it  defines a uniform distribution $U_{ad}$ on class sums. That is, 
$${\rm Prob}(U_{ad}=C_i)=\frac{1}{n}.$$
The corresponding distribution function is given by:
$$f_{z_{ad}}=\frac{1}{nd}\chi_{ad}.$$ This follows from \eqref{ci} since $\chi_{ad}=\sum\frac{d}{\dim{\mathfrak C}_i}F_i$ (see e.g. \cite[Th.2.2]{cwturki}).
\end{example}

Starting from  distribution functions, we have:
\begin{lemma}\label{distfunction}
Assume $R(H)$ is commutative. An element $f\in R(H)$ is a distribution function on $Z(H)$ if and only if 
$$ (i) \;\langle f,\eta_i\rangle\in\R^+\cup \{0\}\;\forall i\quad\text{and}\quad (ii)\; \langle f,\gL\rangle=\frac{1}{d}.$$ 
In this case $z=d\Psi\minus(f)$ is a central distribution element and $$f_z=f=\frac{1}{d}\sum_i \langle\chi_i,z\rangle s(\chi_i).$$   
\end{lemma}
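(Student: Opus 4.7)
The plan is to unpack both implications through the bijection $\Psi\colon Z(H)\to R(H)$, using the two dual basis pairs already recorded: $\{F_i\}\subset R(H)$ dual to the normalized class sums $\{\eta_i\}\subset Z(H)$, and $\{\chi_j\}$ dual to $\{\frac{1}{d_j}E_j\}$. All the needed ingredients---\eqref{filamda} for $\langle F_i,\gL\rangle$, \eqref{ci} for $\Psi\minus(F_i)$, and \eqref{dual1} for $\Psi(E_j)$---are already available in the excerpt, so the argument reduces to careful bookkeeping against these two normalizations.

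For the forward direction, I would start with a central distribution element $z=\sum_i\alpha_i\eta_i$ and invoke \eqref{fz} to get $f=f_z=\sum_i\alpha_i F_i/\dim{\mathfrak C}_i$. Pairing $f$ against $\eta_j$ picks out $\langle f,\eta_j\rangle=\alpha_j/\dim{\mathfrak C}_j\ge 0$, which is (i); pairing against $\gL$ and applying \eqref{filamda} telescopes the $\dim{\mathfrak C}_i$ factors and yields $\langle f,\gL\rangle=\tfrac{1}{d}\sum_i\alpha_i=\tfrac{1}{d}$, which is (ii).

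For the converse, I would expand $f=\sum_i\beta_iF_i$ and set $z=d\Psi\minus(f)$. The identity $\Psi\minus(F_i)=\tfrac{\dim{\mathfrak C}_i}{d}\eta_i$ (immediate from \eqref{ci}) makes $z=\sum_i\alpha_i\eta_i$ with $\alpha_i=\beta_i\dim{\mathfrak C}_i$. Condition (i), read through the duality $\beta_i=\langle f,\eta_i\rangle$, gives $\alpha_i\ge 0$; condition (ii) together with \eqref{filamda} forces $\sum_i\alpha_i=1$. Thus $z$ is a central distribution element, and $f_z=\tfrac{1}{d}\Psi(z)=f$ tautologically by the definition of $z$.

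Finally, for the closed-form $f=\tfrac{1}{d}\sum_i\langle\chi_i,z\rangle s(\chi_i)$, I would switch to the other dual basis pair: write $z=\sum_j\tfrac{1}{d_j}\langle\chi_j,z\rangle E_j$ via \eqref{zzz}, and observe that \eqref{dual1} is equivalent to $\Psi(E_j)=d_js(\chi_j)$. Applying $\Psi$ to $z$ cancels the $d_j$'s, and dividing by $d$ gives the stated formula. The only (mild) obstacle throughout is tracking the compensating normalizations $d$, $d_j$, and $\dim{\mathfrak C}_i$ between the two basis pairs; no deeper input beyond \eqref{dual1}, \eqref{filamda}, \eqref{ci}, and \eqref{fz} is needed.
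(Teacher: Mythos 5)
Your proposal is correct and follows essentially the same route as the paper's proof: expanding $f$ in the basis $\{F_i\}$ dual to $\{\eta_i\}$, pulling back through $\Psi\minus$ via \eqref{ci} to identify $z=d\Psi\minus(f)=\sum_i\langle f,\eta_i\rangle(\dim{\mathfrak C}_i)\eta_i$, and switching to the $\{\chi_i\}$/$\{\frac{1}{d_i}E_i\}$ dual pair for the closed-form expression. The only difference is that you spell out the forward direction explicitly, which the paper leaves as the same bookkeeping run in reverse.
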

\begin{proof}
By dual bases,  $f=\sum \langle f,\eta_i\rangle F_i,$ hence  by \eqref{ci}, 
\begin{equation}\label{zf}\Psi\minus(f)= \frac{1}{d}\sum\langle f,\eta_i\rangle (\dim{\mathfrak C}_i)\eta_i.\end{equation}
Let 
$z=d\Psi\minus(f)=\sum\langle f,\eta_i\rangle (\dim{\mathfrak C}_i)\eta_i,$
then  $f=f_z.$ We claim that $z$ is a central distribution element. Indeed, by assumption (i) on $f$ all coefficients of $\{\eta_i\}$'s are non-negative real numbers. Now, 
$$\langle\gep,z\rangle=\langle\gep,df\rightharpoonup\gL\rangle=d\langle f,\gL\rangle=1,$$
where the last equality follows from assumption (ii) on $f.$

By the other set of dual bases, $z=\sum_i \langle z,\chi_i\rangle\frac{1}{d_i}E_i.$ The last part follows from \eqref{dual1}, since $f_z=\frac{1}{d}\Psi(z).$ 
\end{proof}

\begin{example}\label{frob}
Counting functions for groups give rise to  probability functions on the group algebras after dividing by an appropriate power of $|G|.$ Below are two examples:

(I) Frobenius proved  that the function on a finite group $G$ that counts the number of ways an element of $G$ can be realized as a commutator  is given by:
$$f_{rob}=\sum_i \frac{|G|}{d_i}\chi_i.$$
Since the total number of commutators $aba\minus b\minus,\;a.b\in G,$ is $G\times G,$ we make the counting function $f_{rob}$ into  a distribution function by taking:
$$f=\frac{1}{|G|^2}f_{rob}.$$

Consider $f_{rob}$ as an element of $R(H)$ where $H=kG.$ We claim that $f$ is a distribution function in the sense of Lemma \ref{distfunction}. Indeed, since  $\eta_i=\frac{C_i}{|{\mathfrak C}_i|},$ we have for arbitrary  $g_i\in{\mathfrak C}_i,$ 
$$\langle f_{rob},\eta_i\rangle=\langle f_{rob},g_i\rangle\ge 0.$$
Hence property (i) is satisfied. Since $\langle f_{rob},\gL\rangle=|G|\langle\gep,\gL\rangle=|G|,$ property (ii) follows. Thus $f$ is a distribution function.

By \eqref{zf}   the central distribution element corresponding to $f$ is given by:  
$$z=\frac{1}{|G|^2} \sum_i\langle f_{rob},\eta_i\rangle|{\mathfrak C}_i|\eta_i,$$

An explicit realization of $z$ in terms of the elements of the group is the {\it extensive commutator} $z$ as in \eqref{ext}, 
$$z=\sum\frac{1}{|G|^2}\sum_{g\in G}aba\minus b\minus.$$
 Indeed, choose arbitrarily $g_i\in{\mathfrak C}_i,$ then,
\begin{eqnarray*}\lefteqn{\frac{1}{|G|^2}\sum_{a,b\in G}aba\minus b\minus=}\\
&=&\frac{1}{|G|^2}\sum_{g\in G}\langle f_{rob},g\rangle g=\frac{1}{|G|^2} \sum_i\langle f_{rob},g_i\rangle C_i=\frac{1}{|G|^2} \sum_i\langle f_{rob},\eta_i\rangle|{\mathfrak C}_i|\eta_i=z.\end{eqnarray*} 

In \cite{cwcom} the function $f_{rob}=\sum_i \frac{d}{d_i}\chi_i$ was generalized to any semisimple Hopf algebra. We showed there that:   
$$\frac{1}{d^2}f_{rob}=\frac{1}{d}\Psi(\gga_1).$$

In what follows we show that the Hopf analogue $\gga_1$ of the {\it extensive commutator} $z$ is  a central distribution element.  Analogously, it follows from \eqref{fz} that $\frac{1}{d^2}f_{rob}$ is the distribution function related to $\gga_1.$ 

\medskip (II) Another counting function for groups is the root function. It counts the number of solutions in $G$ to the equation $x^m=g,\,g\in G.$ This function is given by:
$$N_{mrt}=\sum_i\langle \chi_i,\frac{1}{|G|}\sum_{g\in G}g^m\rangle\chi_i.$$
The coefficient of $\chi_i$ is called the $m$-th Frobenius-Schur indicator.
The corresponding distribution function  is obtained by dividing by $|G|,$ that is:
$$f_{mrt}=\frac{1}{|G|^2}\sum_i\langle \chi_i,\sum_{g\in G}g^m\rangle\chi_i.$$
It is straightforward to check that $f_{mrt}$ is a ditribution function for the group algebra. The corresponding central distribution element is $\frac{1}{|G|}\sum_{g\in G}g^m.$

In \cite{lm} the Frobenius-Schur indicator was generalized to any semisimple Hopf algebras  where the element $\frac{1}{|G|}\sum_{g\in G}g^m$ is  generalized to the central element $\gL^{[m]}=\sum\gL_1\cdots\gL_m.$ By Lemma \ref{distfunction},
$$f_{mrt}=\frac{1}{d}\sum_i\langle \chi_i,\gL^{[m]}\rangle\chi_i.$$
We do not know if in general $f_{mrt}$ is a distribution function. We  ask:

\medskip\noin{\bf Question:} Let $H$ be a semisimple Hopf algebra so that $R(H)$ is commutative. When is it true that $\langle f_{mrt},\eta_i\rangle\in\R^+\cup\{0\},$ (or equivalently is $\langle F_i,\gL^{[m]}\rangle\in\R^+\cup\{0\}$) for all $i?$
\end{example}

\medskip
In what follows we investigate {\it probabilistically nilpotet} Hopf algebras, a notion that holds trivially for finite groups. We show first,
\begin{lemma}\label{etaseta}
For each normalized class sum $\eta_i$ we have $$\{\eta_i,\gL\}=\eta_iS\eta_i.$$ 
\end{lemma}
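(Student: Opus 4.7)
The plan is to show both sides lie in $Z(H)$ and then to verify they have the same pairing with every irreducible character. That $\{\eta_i,\Lambda\}\in Z(H)$ is \cite[Lemma 2.3]{cwcom} (already used to define the operator $T$), and $\eta_i S\eta_i\in Z(H)$ because $Z(H)$ is stable under $S$. Since $\{\chi_k\}$ separates central elements, it suffices to compute $\langle\chi_k,\{\eta_i,\Lambda\}\rangle$ and $\langle\chi_k,\eta_i S\eta_i\rangle$ for each $k$ and to see that they coincide.

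For the left-hand side I would apply identity \eqref{zi} with $h=\eta_i$ to obtain
$$\langle\chi_k,\{\eta_i,\Lambda\}\rangle=\frac{1}{d_k}\langle\chi_k s(\chi_k),\eta_i\rangle.$$
The running assumption of this section is that $R(H)$ is commutative, so $\{F_j\}$ is a complete set of primitive idempotents of $R(H)$ and $\{\eta_j\}$ is its dual basis in $Z(H)$. Consequently the functional $p\mapsto\langle p,\eta_i\rangle$ is precisely the algebra character of $R(H)$ associated with $F_i$, hence a ring homomorphism $R(H)\to k$. Applied to the product $\chi_k s(\chi_k)$ this yields
$$\langle\chi_k s(\chi_k),\eta_i\rangle=\langle\chi_k,\eta_i\rangle\,\langle s(\chi_k),\eta_i\rangle=\xi_{ki}\,\langle\chi_k,S\eta_i\rangle.$$

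For the right-hand side I would use that $\eta_i$, being central, acts on the irreducible module $V_k$ by the scalar $\xi_{ki}/d_k$; likewise $S\eta_i$ is central and acts by $\langle\chi_k,S\eta_i\rangle/d_k$. Hence $\eta_i S\eta_i$ acts on $V_k$ by the scalar $\xi_{ki}\langle\chi_k,S\eta_i\rangle/d_k^2$, and taking trace gives
$$\langle\chi_k,\eta_i S\eta_i\rangle=\frac{\xi_{ki}\,\langle\chi_k,S\eta_i\rangle}{d_k},$$
which matches the value obtained for the LHS in the previous paragraph. Equality in $Z(H)$ follows.

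The only step worth triple-checking, and the mild obstacle, is the multiplicativity $\langle\chi_k s(\chi_k),\eta_i\rangle=\langle\chi_k,\eta_i\rangle\langle s(\chi_k),\eta_i\rangle$: it rests squarely on commutativity of $R(H)$, which makes the $F_i$ genuine primitive idempotents of a commutative semisimple algebra and allows $\eta_i$ to be identified with an algebra character of $R(H)$. The remaining manipulations are routine applications of dual bases, centrality, and the identity \eqref{zi}.
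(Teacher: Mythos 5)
Your argument is correct, but it takes a different route from the paper's. The paper stays entirely on the Hopf-algebra side: since ${\mathfrak C}_i$ is a left coideal, every right tensorand of $\Delta\eta_i$ lies in ${\mathfrak C}_i$, and by \cite[Prop.~3.6]{cwturki} the integral averages any $x\in{\mathfrak C}_i$ to $\gL\ad x=\langle\gep,x\rangle\eta_i$; writing $\{\eta_i,\gL\}=\sum{\eta_i}_1\bigl(\gL\ad S{\eta_i}_2\bigr)=\sum{\eta_i}_1 S\bigl(\gL\ad{\eta_i}_2\bigr)$ then collapses the sum to $\eta_iS\eta_i$ in two lines. You instead dualize: both sides are central, the irreducible characters separate central elements, \eqref{zi} evaluates the left side, Schur's lemma evaluates the right side, and the two agree because $\langle\,\cdot\,,\eta_i\rangle$ is the algebra character of the commutative semisimple algebra $R(H)$ attached to $F_i$ and hence multiplicative on $\chi_k s(\chi_k)$. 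Each step of yours checks out, and the reliance on commutativity of $R(H)$ is harmless since it is the running hypothesis of the section (and is equally needed to make sense of $\eta_i$ and of the cited proposition). The trade-off: your version uses only identities already established in this paper (\eqref{zi}, dual bases) and avoids the external reference, but being a character computation it only sees central elements; the paper's coideal computation gives in passing the stronger pointwise statement $\{x,\gL\}=xS\eta_i$ for every $x\in{\mathfrak C}_i$, which a pairing-with-characters argument cannot reach.
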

\begin{proof}
Since ${\mathfrak C}_i$ is a left coideal and $\eta_i\in {\mathfrak C}_i,$ it follows that each component ${\eta_i}_2\in {\mathfrak C}_i.$ By \cite[Prop.3.6]{cwturki} we have that $\gL\ad{\eta_i}_2=\langle\gep,{\eta_i}_2\rangle{\eta_i}.$ Hence
$$\{\eta_i,\gL\}=\sum{\eta_i}_1\gL\ad S{\eta_i}_2=\sum{\eta_i}_1S(\gL\ad{\eta_i}_2)=\eta_iS\eta_i.$$
\end{proof}
Next we show,
\begin{lemma}\label{product}
If $H$ is quasitriangular then $$\eta_i\eta_j=\sum q_{ijk}\eta_k,\quad q_{ijk}\in\Q^+\cup\{0\}.$$  Hence the product of two central distribution elements is a central distribution element.
\end{lemma}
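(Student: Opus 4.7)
The plan is to use the Drinfeld map $f_Q\colon R(H)\to Z(H)$ associated to the quasitriangular structure to transfer the non-negativity of fusion rules from $R(H)$ to $Z(H)$. Writing the $R$-matrix as $R=\sum R^{(1)}\otimes R^{(2)}$ and setting $Q=R_{21}R$, the map $f_Q(p)=(p\otimes\Id)(Q)$ is an algebra homomorphism into the center, a classical fact when $H$ is quasitriangular.

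First I would relate the basis $\{\eta_k\}$ of $Z(H)$ to the images under $f_Q$ via a non-negative rational change of coordinates. Since $R(H)=\bigoplus kF_j$ splits as a direct sum of primitive idempotents, the $\{f_Q(F_j)\}$ are orthogonal idempotents in $Z(H)$ and hence decompose as sums of the central primitive idempotents $E_k$; tracking this partition, together with the dual-basis expansion $\eta_j=\sum_k\xi_{kj}E_k/d_k$ and the orthogonality of the generalized character table $(\xi_{ij})$, should yield non-negative rational transition formulas in both directions. Then, using the homomorphism property and the non-negative integer fusion rules $\chi_i\chi_j=\sum_l m_{ij}^l\chi_l$, we obtain $f_Q(\chi_i)f_Q(\chi_j)=\sum_l m_{ij}^l f_Q(\chi_l)$, which once transferred to the $\eta_k$-basis yields $\eta_i\eta_j=\sum_k q_{ijk}\eta_k$ with $q_{ijk}\in\Q^+\cup\{0\}$. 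Rationality itself is guaranteed by Galois invariance of the $\eta_k$'s and their multiplication.

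For the ``hence'' clause, if $z_1=\sum\alpha_i\eta_i$ and $z_2=\sum\beta_j\eta_j$ are central distribution elements, then by bilinearity
\[
z_1 z_2=\sum_k\Bigl(\sum_{i,j}\alpha_i\beta_j q_{ijk}\Bigr)\eta_k
\]
has non-negative rational coefficients, and they sum to $\langle\gep,z_1z_2\rangle=\langle\gep,z_1\rangle\langle\gep,z_2\rangle=1$; hence $z_1z_2$ is again a central distribution element.

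The main obstacle is identifying the precise positive-rational relationship between the $f_Q$-images and the $\eta_k$-basis in the non-factorizable case. When $f_Q$ is a bijection (factorizable case) this is immediate, but in general one must work carefully with the image subalgebra of $Z(H)$ and handle the possible kernel of $f_Q$, perhaps by lifting to the Drinfeld double $D(H)$, where the corresponding Drinfeld map is automatically bijective, and then pushing down.
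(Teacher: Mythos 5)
The paper disposes of the first assertion by citing \cite[Th.~2.6]{cwmia}; your proposal is essentially an attempt to reprove that result via the Drinfeld map, and it has a genuine gap exactly at the point you yourself flag as ``the main obstacle.'' Two concrete problems. First, the non-negative rational ``transition formulas in both directions'' between $\{f_Q(\chi_l)\}$ and $\{\eta_k\}$ that your scheme requires are never established, and they cannot come from the generalized character table in the way you suggest: the expansion $\eta_j=\sum_k\xi_{kj}E_k/d_k$ has entries $\xi_{kj}$ that are in general negative or non-real (already for $kG$ the ordinary character table has negative entries), so non-negativity does not survive that change of basis. Second, and more seriously, for a merely quasitriangular $H$ the map $f_Q$ can be wildly non-injective: for $H=kG$ with the trivial $R$-matrix $R=1\otimes 1$ one gets $Q=1\otimes 1$ and $f_Q(p)=\langle p,1\rangle 1$, so the image of $f_Q$ is $k\cdot 1$ and no information about the products $\eta_i\eta_j$ can be transferred through $f_Q$ at all --- yet the lemma is of course true for $kG$. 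Hence ``push the fusion rules of $R(H)$ through $f_Q$'' fails in general, and the proposed repair (lift to $D(H)$, where the Drinfeld map is bijective, then push back down) is precisely where all the real work lies: one must relate the classes ${\mathfrak C}_i$ of $H$, defined via $\gL\leftharpoonup F_iH^*$, to those of $D(H)$ and show that the descent preserves non-negativity. None of that is carried out, so the first assertion remains unproved in your write-up.

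The ``hence'' clause is correct and coincides with the paper's argument: bilinearity plus multiplicativity of $\gep$ gives $\sum_k\bigl(\sum_{i,j}\alpha_i\beta_j q_{ijk}\bigr)=\langle\gep,z_1z_2\rangle=\langle\gep,z_1\rangle\langle\gep,z_2\rangle=1$, and the coefficients are non-negative once the first part is granted.
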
 
\begin{proof}
The first part follows from  \cite[Th.2.6]{cwmia}.  The second part  follows from the first part and the fact that $\gep$ is multiplicative. 
\end{proof}
The lemmas above imply,
\begin{proposition}\label{quasieta}
Assume $H$ is quasitriangulr then the following hold:

\medskip\noin (i) For any central distribution element $z,\;T(z)=\{z,\gL\}$ is a central distribution element as well.

\medskip\noin (ii) The matrix of the opertator $T$ with respect to the basis $\{\eta_i\}$ is a non-negative rational matrix which equals $PAP\minus$ where  $A$ is as defined in Proposition \ref{matrixA}, $(\xi_{ij})$ is the character table of $H$ and 
$$P=\left({\rm diag}\{\dim{\mathfrak C}_0,\dots,\dim{\mathfrak C}_{n-1}\}\right)(\xi_{ji^*})\left({\rm diag}\{d\minus_0,\dots,d\minus_{n-1}\}\right)$$ 
\end{proposition}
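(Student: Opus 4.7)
The plan is to treat (i) first as a stepping-stone to the non-negativity claim in (ii), and then to pin down the matrix of $T$ in the $\eta$-basis by an explicit change-of-basis computation, reducing the whole identity $[T]_\eta = PAP^{-1}$ to one Hopf-algebraic second-orthogonality relation.

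For (i), the key is Lemma \ref{etaseta}, which gives $T(\eta_j)=\{\eta_j,\gL\}=\eta_j S\eta_j$. The next step is the observation that $S$ permutes the normalized class sums, i.e.\ $S\eta_j=\eta_{j^*}$ (equivalently $s(F_j)=F_{j^*}$, which is part of the duality between the dual bases $\{F_i\}\subset R(H)$ and $\{\eta_i\}\subset Z(H)$, as in \cite{cwmia,cwturki}). Hence $T(\eta_j)=\eta_j\eta_{j^*}$, and Lemma \ref{product} — this is where quasitriangularity enters — rewrites this as $\sum_k q_{jk}\eta_k$ with $q_{jk}\in\Q^+\cup\{0\}$. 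For $z=\sum\ga_j\eta_j$ a central distribution element, linearity then yields $T(z)=\sum_k\beta_k\eta_k$ with $\beta_k\in\R^+\cup\{0\}$. To check $\sum_k\beta_k=1$, I would compute directly from $\{a,b\}=\sum a_1b_1Sa_2Sb_2$ that $\gep(\{a,b\})=\gep(a)\gep(b)$, so $\gep(T(z))=\gep(z)\gep(\gL)=1$, and since $\langle\gep,\eta_k\rangle=1$ this forces $\sum\beta_k=1$.

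For (ii), the rationality and non-negativity of the matrix entries of $T$ in the $\eta$-basis are immediate from (i) applied to $z=\eta_j$. It remains to show the matrix equals $PAP^{-1}$. First, by the dual-bases identity \eqref{zzz},
\begin{equation*}
\eta_j=\sum_i\frac{\xi_{ij}}{d_i}E_i=\sum_i(\xi_{ij}d_i)\frac{E_i}{d_i^2},
\end{equation*}
so the change-of-basis matrix from $\{\eta_i\}$ to $\{E_i/d_i^2\}$ is $M=D_d\Xi$, where $D_d=\mathrm{diag}(d_0,\dots,d_{n-1})$ and $\Xi=(\xi_{ij})$. By Proposition \ref{matrixA}(i) the matrix of $T$ in the basis $\{\eta_i\}$ is therefore $M^{-1}AM$.

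The crux is then to show $P=dM^{-1}$, or equivalently $MP=dI$, for then the scalar $d$ cancels in $PAP^{-1}$ and we recover $M^{-1}AM$. Unpacking $M$ and $P$, this reduces to the orthogonality identity
\begin{equation*}
\sum_j \dim{\mathfrak C}_j\,\xi_{ij}\,\xi_{kj^*}=d\,\gd_{ik}.
\end{equation*}
To prove this, I would evaluate $\langle \chi_is(\chi_k),\gL\rangle$ in two ways. First, by \eqref{dual1}, $\langle \chi_is(\chi_k),\gL\rangle=\dim\Hom(V_k,V_i)=\gd_{ik}$. Second, decomposing $\chi_i=\sum_l\xi_{il}F_l$ in the primitive-idempotent basis of $R(H)$ and using $F_lF_m=\gd_{lm}F_l$ gives $\chi_is(\chi_k)=\sum_l\xi_{il}\xi_{k^*l}F_l$; pairing against $\gL=\sum_j\frac{\dim{\mathfrak C}_j}{d}\eta_j$ from Example \ref{uniform}(I) and using $\langle F_l,\eta_j\rangle=\gd_{lj}$ yields $\gd_{ik}=\frac{1}{d}\sum_j\dim{\mathfrak C}_j\,\xi_{ij}\xi_{k^*j}$. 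Combined with $\xi_{k^*j}=\xi_{kj^*}$ (which is again $S\eta_j=\eta_{j^*}$), this is the required identity. The main obstacle I anticipate is precisely the bookkeeping around $S\eta_j=\eta_{j^*}$ and $s(F_j)=F_{j^*}$ — a clean proof requires invoking the class-sum structure theory of \cite{cwmia,cwturki} rather than anything in the excerpt itself — but once in hand, the matrix identification follows by the elementary linear-algebra cancellation described above.
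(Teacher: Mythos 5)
Your proof is correct and follows essentially the same route as the paper: part (i) via Lemmas \ref{etaseta} and \ref{product} together with $\gep(\{z,\gL\})=1$, and part (ii) by conjugating $A$ with the change of basis between $\{E_i/d_i^2\}$ and $\{\eta_i\}$. The only difference is cosmetic: the paper obtains the change of basis directly by applying $\Psi\minus$ to the relation $s(\chi_j)=\sum_i\xi_{ji^*}F_i$, yielding $\frac{E_j}{d_j^2}=\frac{1}{dd_j}\sum_i\xi_{ji^*}(\dim{\mathfrak C}_i)\eta_i$ (i.e.\ $\frac1d P=M\minus$ at once), whereas you invert $M=D_d\Xi$ by proving the second-orthogonality relation $\sum_j\dim{\mathfrak C}_j\,\xi_{ij}\xi_{kj^*}=d\,\gd_{ik}$ from $\langle\chi_is(\chi_k),\gL\rangle=\gd_{ik}$ — a correct, slightly longer path resting on the same facts ($S\eta_j=\eta_{j^*}$ and the Frobenius map).
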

\begin{proof}
(i)  If $z=\sum\ga_i\eta_i,\,\ga_i\ge 0,$ then by Lemma \ref{etaseta} $\{z,\gL\}=\sum_i\ga_i\eta_iS\eta_i$ which is a central distribution element since all the coefficients are non-negative by Lemma \ref{product} and $\langle\gep,  \{z,\gL\}\rangle=1.$

\medskip(ii) The fact that the matrix of $T$ with respect to the basis $\{\eta_i\}$ has non-negative rational entries follows from Lemma \ref{etaseta} and Lemma \ref{product}. 

By Proposition \ref{matrixA}(i), the matrix $A$ is the matrix of $T$ with respect to the basis $\{\frac{E_i}{d_i^2}\}.$ 
Now, the character table  is the change of bases matrix between $\{F_i\}$ and $\{\chi_i\}$ in $R(H)$ (see e.g. \cite[Th.3.1]{cwturki}. Applying $\Psi\minus$ yields that:
$$\frac{E_j}{d_j^2}=\frac{1}{dd_j}\sum_i\xi_{ji^*}(\dim{\mathfrak C}_i)\eta_i.$$
These imply the desired result.

\end{proof}

We can show now an essential property of the iterated commutator:
\begin{proposition}\label{converge}
Let $z\in Z(H)$ be a central distribution element and let $T(z)=\{z,\gL\}.$ Then $${\rm Prob}(T^m(z)=1)\longrightarrow 1\quad\text{as}\quad m\rightarrow \infty.$$
\end{proposition}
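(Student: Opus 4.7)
The plan is to recognize the iterated commutator operator $T$ as a finite-state column-stochastic dynamical system on the convex set of central distribution elements, to verify that $\eta_0=1$ is an absorbing fixed point reachable in one step from every other state, and then to run a standard geometric contraction.

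By Proposition \ref{quasieta}(i), $T$ preserves the convex set of central distribution elements, so writing $z = \sum_i \alpha_i \eta_i$ with $\alpha_i \ge 0$ and $\sum_i \alpha_i = 1$, the entire orbit $\{T^m(z)\}_{m \ge 0}$ lies in this set. Let $M = (M_{ij})$ be the matrix of $T$ in the basis $\{\eta_i\}$. By Proposition \ref{quasieta}(ii) its entries are non-negative rationals, and since $\gep(\{a,b\}) = \gep(a)\gep(b)$ gives $\gep \circ T = \gep$ while $\langle \gep, \eta_i\rangle = 1$ for every $i$, the columns of $M$ sum to $1$. Moreover $T(\eta_0) = T(1) = 1 = \eta_0$, so the $0$-th column of $M$ is the standard vector $e_0$; equivalently, $\eta_0$ is an absorbing state of the associated Markov chain.

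The heart of the argument is the positivity claim: every entry in the $0$-th row of $M$ is strictly positive. By Lemma \ref{etaseta}, $T(\eta_i) = \eta_i S\eta_i$, and so
\[
M_{0i} \;=\; \langle F_0,\, \eta_i S\eta_i\rangle \;=\; \tfrac{1}{d}\,\langle \lambda,\, \eta_i S\eta_i\rangle.
\]
Expanding $\eta_i = \sum_k (\xi_{ki}/d_k) E_k$ and $S\eta_i = \sum_k (\xi_{k^*i}/d_k) E_k$, using $E_kE_l = \gd_{kl}E_k$ and $\langle\lambda, E_k\rangle = d_k^2$ yields
\[
M_{0i} \;=\; \tfrac{1}{d}\sum_k \xi_{ki}\,\xi_{k^* i},
\]
which by the second orthogonality relation for the generalized character table of semisimple Hopf algebras with commutative $R(H)$ equals $1/\dim{\mathfrak C}_i > 0$. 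Set $q = \min_i M_{0i} > 0$.

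With this in hand let $p_m = {\rm Prob}(T^m(z) = 1)$, the coefficient of $\eta_0$ in $T^m(z) = \sum_i \alpha_i^{(m)}\eta_i$. Since $M_{00} = 1$ and $M_{0i} \ge q$ for every $i$,
\[
p_{m+1} \;=\; \sum_i M_{0i}\,\alpha_i^{(m)} \;=\; p_m + \sum_{i\ne 0}M_{0i}\,\alpha_i^{(m)} \;\ge\; p_m + q(1 - p_m) \;=\; (1 - q)p_m + q,
\]
so $1 - p_{m+1} \le (1 - q)(1 - p_m)$, and induction gives $1 - p_m \le (1-q)^m(1 - p_0) \to 0$. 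Hence ${\rm Prob}(T^m(z) = 1) \to 1$, as required. The step I expect to be the main obstacle is the strict positivity $M_{0i} > 0$: it is there that commutativity of $R(H)$ plays its essential role via the second orthogonality of the generalized character table; once the uniform lower bound $q > 0$ is in hand, the remainder is routine bookkeeping and a one-line contraction.
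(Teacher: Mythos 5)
Your proof is correct and is essentially the paper's argument in Markov-chain clothing: the paper likewise reduces everything to the fact that the coefficient of $1$ in $\eta_i S\eta_i$ equals $1/\dim{\mathfrak C}_i$ (obtained directly from $\langle\lambda,\eta_iS\eta_i\rangle=\langle\eta_i,\Psi(\eta_i)\rangle=d/\dim{\mathfrak C}_i$ rather than via your character-table expansion and second orthogonality, which is the same identity) and then runs the identical geometric contraction with ratio $1-\frac{1}{c}$, $c=\max_i\dim{\mathfrak C}_i=1/q$. The one point to make explicit is that the non-negativity of the iterates $\alpha_i^{(m)}$, which both arguments need for the contraction step, comes from Proposition \ref{quasieta}(i) and hence from quasitriangularity --- you do invoke this, so there is no gap.
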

\begin{proof}

By \eqref{ci}, $\Psi(\eta_i)=\frac{d}{\dim {\mathfrak C}_i}F_i,$ hence 
$$\langle\eta_iS(\eta_i),\gl\rangle=\langle\eta_i,\Psi(\eta_i)\rangle=\frac{d}{\dim {\mathfrak C}_i}\langle\eta_i,F_i\rangle=\frac{d}{\dim {\mathfrak C}_i}.$$
On  the other hand, the dual bases imply that $\langle\eta_i,\gl\rangle=d\gd_{i,0}.$ Hence if $\eta_iS\eta_i=\gb_i\cdot 1+\sum_{j\ne 0}\gb_{j_i}\eta_j$ then 
$\langle\eta_iS\eta_i,\gl\rangle=d\gb_i.$ Hence $\gb_i=\frac{1}{\dim {\mathfrak C}_i}$ and we have:
\begin{equation}\label{bsb} \eta_iS\eta_i=\frac{1}{\dim {\mathfrak C}_i}\cdot 1 +\cdots\end{equation}

Assume $z=\ga_0\cdot 1+\sum_{i\ne 0}\ga_i\eta_i.$  By Lemma \ref{etaseta} we have 
$$T(z)=\ga_0+\sum_{i\ne 0}\ga_i\eta_iS\eta_i,$$ hence by \eqref{bsb}
\begin{equation}\label{tz}T(z)=\left(\ga_0+\sum_{i\ne 0}\frac{\ga_i}{\dim {\mathfrak C}_i}\right)\cdot 1+\sum_{i\ne 0}\ga'_i\eta_i.\end{equation}
Let $c=\max\{\dim {\mathfrak C}_i\}.$  Since $\langle\gep,T(z)\rangle=1$ it follows that 
\begin{eqnarray*}\lefteqn{\sum_{i\ne 0}\ga'_i=
1-\ga_0-\sum_{i\ne 0} \frac{\ga_i}{\dim {\mathfrak C}_i}}\\
&\le& 1-\ga_0-\sum_{i\ne 0} \frac{\ga_i}{c}\\
&=&1-\ga_0- \frac{1}{c}(1-\ga_0)\qquad(\text{since }\,\langle\gep,z\rangle=\sum_i\ga_i=1)\\
&=&(1-\ga_0)(1-\frac{1}{c}).\end{eqnarray*}
That is,
\begin{equation}\label{tzr}
\sum_{i\ne 0}\ga'_i\le (1-\ga_0)(1-\frac{1}{c}).\end{equation} 
Assume by induction
$$ T^m(z)=\ga_m\cdot 1+\sum_{i\ne 0}\ga_{mi}\eta_i\quad\text{where}\quad \sum_{i\ne 0}\ga_{mi}\le (1-\ga_0)(1-\frac{1}{c})^m$$
Then by \eqref{tz}, 
$$T^{m+1}(z)= \left(\ga_m+\sum_{i\ne 0}\frac{\ga_{mi}}{\dim {\mathfrak C}_i}\right)\cdot 1+\sum_{i\ne 0}\ga_{m+1,i}\eta_i.$$
By \eqref{tzr},
\begin{eqnarray*}\lefteqn{\sum_{i\ne 0}\ga_{m+1,i}\le (1-\ga_m)(1-c)=}\\
&=&\sum_{i\ne 0}\ga_{mi}(1-c)\quad(\text{since }\langle\gep,T^m(z)\rangle=1)\\
&\le&(1-\ga_0)(1-\frac{1}{c})^m(1-c)\quad(\text{by induction hypothesis})\\
&=& (1-\ga_0)(1-\frac{1}{c})^{m+1}
\end{eqnarray*}
It follows that $\sum_{i\ne 0}\ga_{m,i}\rightarrow 0.$  and so $\ga_m\rightarrow 1$ as $m\rightarrow \infty.$
\end{proof}

Note that by \eqref{gammai},  If $H$ is quasitriangular then $\gga_m$ is a central distribution element for all $m\ge 0.$ 
By Theorem \ref{connection} $H$ is nilpotent if and only if $\gga_m=1.$ 
This motivates the following definition. 

\begin{definition}\label{pnil} A semisimple Hopf algebra is {\bf Probabilistically nilpotent} if 
$${\rm Prob}(\gga_m=1)\longrightarrow 1\quad\text{as}\quad m\rightarrow \infty.$$
\end{definition}
Proposition \ref{converge} yields now the main result of this section:
\begin{theorem}\label{main3}
Let  $H$ be a semisimple quasitriangular Hopf algebra over an algebraically closed field of characteristic $0.$ Then $H$ is probabilistically nilpotent. 
\end{theorem}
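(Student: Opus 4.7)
The plan is to observe that Theorem \ref{main3} is a direct specialization of Proposition \ref{converge} to the particular central distribution element $z=\gL.$ Everything hinges on verifying that the iterated commutators $\gga_m$ fall within the scope of that proposition, and then quoting its conclusion.

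First I would recall from \eqref{gammai} that $\gga_m=T^m(\gL),$ where $T(x)=\{x,\gL\}$ is the commutator operator. Next I would note that $\gL$ is itself a central distribution element, as shown in Example \ref{uniform}(I), where the expansion $\gL=\sum_i\frac{\dim{\mathfrak C}_i}{d}\eta_i$ has non-negative rational coefficients summing to $1$ (equivalently $\langle\gep,\gL\rangle=1$). Here I am tacitly using that $R(H)$ is commutative under the quasitriangularity hypothesis, so the basis of normalized class sums $\{\eta_i\}$ and the whole distribution formalism of $\S 3$ are available.

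With these two observations in hand, I would invoke Proposition \ref{quasieta}(i): since $H$ is quasitriangular, $T$ sends central distribution elements to central distribution elements. A straightforward induction on $m$ then shows that each $\gga_m=T^m(\gL)$ is itself a central distribution element, so in particular the probability $\mathrm{Prob}(\gga_m=1)$ is well defined as the coefficient of $\eta_0=1$ in the expansion of $\gga_m$ in the $\{\eta_i\}$ basis.

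Finally I would apply Proposition \ref{converge} with $z=\gL:$ the proposition asserts $\mathrm{Prob}(T^m(\gL)=1)\to 1$ as $m\to\infty,$ which by $\gga_m=T^m(\gL)$ reads $\mathrm{Prob}(\gga_m=1)\to 1.$ This is precisely the content of Definition \ref{pnil}, so $H$ is probabilistically nilpotent. There is essentially no obstacle at this stage, since all the analytic work (the geometric decay estimate $(1-1/c)^m$) has already been carried out in the proof of Proposition \ref{converge}; the role of quasitriangularity is merely to guarantee, via Proposition \ref{quasieta}, that the $T$-orbit of $\gL$ remains in the cone of central distribution elements on which that estimate applies.
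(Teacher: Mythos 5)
Your proposal is correct and follows exactly the paper's own route: the paper likewise deduces Theorem \ref{main3} by noting that $\gL$ is a central distribution element, that quasitriangularity (via Proposition \ref{quasieta}(i)) keeps each $\gga_m=T^m(\gL)$ in the cone of central distribution elements, and then applying Proposition \ref{converge}. No further comment is needed.
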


\medskip
The following result is a generalization of \cite[Th.4.10]{av}. It is an algebraic statement about the eigenvalues of the commutator matrix $A.$

\begin{theorem}\label{eigen}
Let $H$ be a semisimple quasitriangular Hopf algebra over $\C.$ Then the commutator matrix $A$  has $1$ as an eigenvalue with corresponding $1$-dimensional eigenspace. 
 All other eigenvalues $c$ satisfy $|c|<1.$
\end{theorem}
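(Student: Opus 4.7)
The plan is to change basis so that the matrix of $T$ becomes column-stochastic, and then to read the spectrum off directly from the convergence result of Proposition \ref{converge}.

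By Proposition \ref{quasieta}(ii) the matrix $A$ is conjugate over $\Q$ to the matrix $M := PAP^{-1}$ representing $T$ in the basis $\{\eta_0,\ldots,\eta_{n-1}\}$, so it suffices to control the spectrum of $M$. Proposition \ref{quasieta}(i) says that $T$ preserves the set of central distribution elements; applied to each $\eta_j$, this gives $T(\eta_j) = \sum_i M_{ij}\eta_i$ with $M_{ij}\ge 0$ and $\sum_i M_{ij} = \langle\gep,T(\eta_j)\rangle = 1$. Hence $M$ is non-negative and column-stochastic, and $\eta_0 = 1$ is a fixed vector, so $1$ lies in the spectrum.

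Next, I apply Proposition \ref{converge} with $z = \eta_j$ for each $j$. Writing $T^m(\eta_j) = \alpha_{m,j}\cdot 1 + \sum_{i\ne 0}\alpha_{m,ij}\eta_i$, the proof of that proposition supplies the explicit estimate $\sum_{i\ne 0}\alpha_{m,ij}\le (1-\delta_{j,0})(1-1/c)^m$, where $c=\max_i\dim\mathfrak{C}_i$. Since every $\alpha_{m,ij}$ is non-negative, each individual coefficient is squeezed to zero and $\alpha_{m,j}\to 1$. In matrix terms, the $j$th column of $M^m$ converges to $(1,0,\ldots,0)^T$, so $M^m\to J$, where $J$ is the rank-one matrix all of whose columns are $(1,0,\ldots,0)^T$.

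The desired spectral information now follows from standard linear algebra. Convergence of $M^m$ forces every eigenvalue $\mu$ of $M$ to satisfy either $|\mu|<1$ or $\mu=1$, and the Jordan blocks at the eigenvalue $1$ must have size one (otherwise the Jordan form would exhibit polynomial growth in $m$). Finally, if $Mv=v$ then $Jv = \lim_m M^m v = v$, so the $1$-eigenspace of $M$ is contained in the image of $J$; since $J$ has rank one, this eigenspace is exactly one-dimensional. The only essential input is Proposition \ref{converge}; beyond that, the argument is routine linear algebra, and I anticipate no serious obstacles other than carefully restating the estimates proved there.
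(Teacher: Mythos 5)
Your proposal is correct, and it rests on the same engine as the paper's proof, namely Proposition \ref{converge}; but the way you extract the spectral conclusions from it is genuinely different. The paper argues pointwise on eigenvectors: given $Tv=cv$ with $v\notin k\cdot 1$, it forms the real vectors $v+\ol{v}$ and $(v-\ol{v})/\imath$, applies the convergence statement to each (after splitting a real vector as a difference of two non-negative combinations of the $\eta_i$), and reads off $\Re(c^m)\to 0$ and $\Im(c^m)\to 0$, hence $|c|<1$. You instead observe that in the basis $\{\eta_i\}$ the matrix $M=PAP\minus$ of $T$ is non-negative and column-stochastic (by Proposition \ref{quasieta}, since $T$ sends the central distribution elements $\eta_j$ to central distribution elements and preserves $\langle\gep,\cdot\rangle$), use the explicit estimate from the proof of Proposition \ref{converge} together with non-negativity of the coefficients to show that $M^m$ converges entrywise to the rank-one matrix $J$ whose columns are all $(1,0,\dots,0)^T$, and then invoke the standard characterization of matrices with convergent powers. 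Both arguments are complete; the one point to make explicit in yours is that non-negativity of the individual coefficients $\alpha_{m,ij}$ (needed to pass from ``the sum tends to $0$'' to ``each entry tends to $0$'') is exactly what Proposition \ref{quasieta}(i) supplies, so quasitriangularity is used there and not only in Proposition \ref{converge}. Your route buys slightly more than the statement asks for: convergence of $M^m$ rules out nontrivial Jordan blocks at $1$, so you get algebraic (not just geometric) multiplicity one for the eigenvalue $1$, which matches the multiplicity statement appearing in Theorem \ref{atothm}; the paper's eigenvector argument only controls the geometric eigenspace directly.
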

\begin{proof}

Since $A$ is the matrix of the operator $T(z)=\{z,\gL\}$ we show the statement for $T.$

Observe first that if $0\ne z=\sum\ga_i\eta_i$ so that $\ga_i\in\R,\;\ga_i\ge 0$ for all $i,$ then necessarily $\langle\gep,z\rangle>0$ and $\tl{z}=\frac{1}{\langle\gep,z\rangle}z$ is a central distribution.   

By Proposition \ref{converge}, ${\rm Prob}(T^{m}(\tl{z})=1)\longrightarrow 1.$ In this case we say that $T^m(\tl{z})$ converges to $1$ and $T^m(z)$ converges to $\langle\gep,z\rangle \cdot 1.$ 

If $z$ is a real combination of $\{\eta_i\}$ then $z=z_+-z_-$ where $z_+$ and $z_-$ have only non-negative coefficients. By above $T^m(z)$ converges to  $\langle\gep,z_+\rangle -\langle\gep,z_-\rangle =\langle\gep,z\rangle\cdot 1.$

Assume now $v=\sum\ga_j\eta_j$ is an eigenvector for $T$ with eigenvalue $c.$  Since the matrix of $T$ with respect to the basis $\{\eta_i\}$ is real, it follows that  $\ol{v}=\sum\ol{\ga_i}\eta_i$ is an eigenvector with eigenvalue $\ol{c}.$ 

Assume $v\notin k.$ 
Then $\ga_j\ne 0$ for some $j>0.$ Since multiplying $v$ by  $\ga_j\minus$ yields another eigenvector, we may assume without loss of generality  that   $\ga_j=1.$ Now, $v+\ol{v}$ is a real vector hence we have  that 
$$T^m(v+\ol{v})=c^mv+\ol{c^mv}$$ converges to $(\langle\gep,v\rangle+\ol{\langle\gep,v\rangle})\cdot 1.$ In particular, taking the coefficient of $\eta_j$ we obtain,
$$2\Re(c^m)=c^m+\ol{c^m}\rightarrow 0\quad\text{as}\;m\rightarrow \infty.$$
 Similarly, $v-\ol{v}=\imath w$ where $w$ is a real vector. Hence 
$T^m(w)$ converges to $\langle\gep,w\rangle\cdot 1= \imath(\langle\gep,v\rangle-\ol{\langle\gep,v\rangle})\cdot 1.$
Since $T^m(w)=\imath(c^mv-\ol{c^mv})$ we can take again the coefficient of $\eta_j$ to obtain
$$-2\Im(c^m)=\imath(c^m-\ol{c^m})\rightarrow 0 \quad\text{as}\;m\rightarrow \infty.$$
Hence 
$$|c^m|^2= (\Re(c^m))^2+(\Im(c^m))^2\longrightarrow 0 \quad\text{as}\;m\rightarrow \infty,$$
implying that $|c|<1.$ 

Since  $T(1)=1,$ we have $1$ is an eigenvalue.  Its coordinate vector with respect to the basis $\{\frac{E_i}{d_i^2}\}$ is given by:
 
$$\left(\begin{array}{clrr}
1\\
d_1^2\\
\vdots\\
d_{n-1}^2	
\end{array}\right)
.$$.  
\end{proof}

\bigskip

\noin{\bf Acknowledgment}. We wish to thank Uzi Vishne for his patient clarifications of the probabilistic methods for groups used in his paper \cite{av}.

\end{document}